\documentclass[english]{amsart}
\usepackage[T1]{fontenc}
\usepackage[latin9]{inputenc}
\usepackage{geometry}
\geometry{verbose,lmargin=3cm,rmargin=3cm}
\usepackage{xcolor}
\usepackage{babel}
\usepackage{float}
\usepackage{units}
\usepackage{mathtools}
\usepackage{amstext}
\usepackage{amsthm}
\usepackage{amssymb}
\usepackage{stmaryrd}
\usepackage{graphicx}
\usepackage[unicode=true,pdfusetitle,
 bookmarks=true,bookmarksnumbered=false,bookmarksopen=false,
 breaklinks=false,pdfborder={0 0 1},backref=false,colorlinks=false]
 {hyperref}

\makeatletter

\providecommand{\tabularnewline}{\\}

\numberwithin{equation}{section}
\numberwithin{figure}{section}
\theoremstyle{plain}
\newtheorem{thm}{\protect\theoremname}[section]
\theoremstyle{definition}
\newtheorem{defn}[thm]{\protect\definitionname}
\theoremstyle{definition}
\newtheorem{example}[thm]{\protect\examplename}
\theoremstyle{plain}
\newtheorem{lem}[thm]{\protect\lemmaname}
\theoremstyle{plain}
\newtheorem{cor}[thm]{\protect\corollaryname}
\theoremstyle{plain}
\newtheorem{prop}[thm]{\protect\propositionname}
\theoremstyle{remark}
\newtheorem{rem}[thm]{\protect\remarkname}
\theoremstyle{remark}
\newtheorem*{rem*}{\protect\remarkname}

\usepackage{subcaption}
\usepackage{graphicx}
\usepackage{xcolor}
\usepackage{tikz}
\usepackage{pgfplots}
\usetikzlibrary{positioning}

\makeatother

\providecommand{\corollaryname}{Corollary}
\providecommand{\definitionname}{Definition}
\providecommand{\examplename}{Example}
\providecommand{\lemmaname}{Lemma}
\providecommand{\propositionname}{Proposition}
\providecommand{\remarkname}{Remark}
\providecommand{\theoremname}{Theorem}

\begin{document}
\global\long\def\R{\mathbb{R}}%

\global\long\def\C{\mathbb{C}}%

\global\long\def\K{\mathbb{K}}%

\global\long\def\Q{\mathbb{Q}}%

\global\long\def\Z{\mathbb{Z}}%

\global\long\def\N{\mathbb{N}}%

\global\long\def\M{\mathcal{M}}%

\global\long\def\U{\mathbb{U}}%

\global\long\def\e{\text{e}}%

\global\long\def\norm#1#2{\left\Vert #1\right\Vert _{#2}}%

\global\long\def\diff{\,d}%

\global\long\def\barre{\,\middle|\,}%

\global\long\def\tendsto#1#2{\underset{#1\to#2}{\longrightarrow}}%

\global\long\def\transpose#1{\prescript{t}{}{#1}}%

\global\long\def\weakto#1#2{\underset{#1\to#2}{\rightharpoonup}}%

\global\long\def\A{\mathcal{A}}%

\global\long\def\X{\mathcal{X}}%

\title{Strong stochastic stability of cellular automata}

\author{Marsan~Hugo}
\address{IMT, Université Toulouse III - Paul Sabatier, Toulouse, France}
\email{hugo.marsan@math.univ-toulouse.fr}

\author{Sablik~Mathieu}
\address{IMT, Université Toulouse III - Paul Sabatier, Toulouse, France}
\email{msablik@math.univ-toulouse.fr}
\urladdr{https://www.math.univ-toulouse.fr/\~msablik/index.html}

\thanks{Mathieu Sablik was partially supported by ANR project Difference (ANR-20-CE48-0002) and the project Computability of asymptotic properties of dynamical systems from CIMI Labex (ANR-11-LABX-0040).}

\begin{abstract}
We define the notion of stochastic stability, already present in the
literature in the context of smooth dynamical systems, for invariant
measures of cellular automata perturbed by a random noise, and the
notion of strongly stochastically stable cellular automaton. We study
these notions on basic examples (nilpotent cellular automata, spreading
symbols) using different methods inspired by those presented in \cite{MST19}.
We then show that this notion of stability is not trivial by proving
that a Turing machine cannot decide if a given invariant measure of
a cellular automaton is stable under a uniform perturbation.
\end{abstract}

\maketitle

\section{Introduction}

\subsection{Stochastic stability: physics motivation}

Dynamical systems, like cellular automata, are models for physical
observations. They can be studied as deterministic models, despite
the presence of errors compared to the real phenomenon: model errors,
measures errors, small perturbation, etc. The study of stochastic
stability (or zero-noise limit) aim to determine on the behaviors
encountered in those deterministic models, which one are resistant
to noise, and thus can be thought of as having a physical ``sense''.

More precisely, let us define a discrete dynamical system $\left(\X,F\right)$
with $\X$ a compact metric system and $F:\X\to\X$ a continuous map.
The long-term behavior can be described by their invariant measures:
denote one of them by $\pi_{0}$. To decide which ones had physical
meaning, A. N. Kolmogorov proposed the following tool \cite{ER85}:
suppose a family $\left(F_{\epsilon}\right)_{\epsilon>0}$ of dynamics
obtained by perturbation of $F$ by a noise of size $\epsilon$. For
each, denote by $\pi_{\epsilon}$ a $F_{\epsilon}$-invariant measure.
If $\pi_{\epsilon}\tendsto{\epsilon}0\pi_{0}$ (in some sense), then
$\pi_{0}$ is said to be \emph{stochastically stable} (or statistically
stable) under small perturbation.

This question is the subject of lot of articles in the context of
smooth dynamical systems, where $\X$ is a Riemannian manifold and
$F$ have some regularity properties (or not), and the measures considered
are often continuous with respect to the Lebesgue measure \cite{ER85,Young02}.
Further works even studied the regularity properties at $0$ of the
map $\epsilon\to\pi_{\epsilon}$ and their link to the speed of convergence
(the linear response \cite{Baladi14} or even quadratic response \cite{GS20}).

\subsection{Cellular automata: computer science (and other) motivation}

Cellular automata (CA) were first introduced by Von Neumann at the
end of the 40's to model local interactions phenomenons \cite{Neumann66}.
A cellular automaton can be defined as a dynamical system defined
by a local rule which acts synchronously and uniformly on the configuration
space $\A^{\Z^{d}}$ where $\A$ is a finite alphabet. These simple
models have a wide variety of different dynamical behaviors and they
are used to model physical systems defined by local rules but also
models of massively parallel computers.

Their perturbed counterpart, Probabilistic Cellular Automata (PCA)
are studied to understand the robustness of their computation, in
particular their dependence towards the initial condition. When a
PCA is \emph{ergodic}, in the sense that every trajectory converges
to the same distribution, it forgets its initial condition, and thus
no reliable computation is possible. PCA are generally ergodic and
in \cite{MST19} the authors exhibit large classes of cellular automata
which have this behavior. There exists some examples of non ergodic
CA in dimension 2 and higher \cite{Toom01}. In dimension 1 non ergodic
CA are more complex \cite{Gacs01} and their construction is based
on fault-tolerant model of computation.

If this problem of fault-tolerant models comes from theoretical computer
science, it could also have practical application. The perturbation
of a cellular automaton can be thought of as errors that can occur
in the update of a computer bit. If such errors are really rare in
our daily computers, they are (theoretically) more frequent in computers
aboard spacecrafts, as they are more vulnerable to cosmic neutron
rays \cite{HH99}. In this paper the errors will occur with probability
$\epsilon>0$.

\subsection{Stochastic stability for cellular automata}

It is natural to try to understand the effect of small random perturbations
on the dynamics of cellular automata and more precisely if the behavior
of the deterministic model can be observed despite the presence of
small errors. As models of computation, they can be used to study
the reliability of computation against noise.

Since a large classes of cellular automata are ergodic, we don't need
the help of other assumptions (like SRB measures) to have the uniqueness
of the invariant measure for each perturbed PCA, which allows us to
study the limit(s) of a family of measures $\left(\pi_{\epsilon}\right)_{\epsilon>0}$
when $\epsilon$ goes to zero. We can hope that this behavior select
only a few of the invariant measures of the deterministic CA, as they
are much more inclined to have a lot of invariant measures. If the
CA is not ergodic, we consider stable measures as the set of adherence
values when $\epsilon$ goes to $0$ of invariant measures of the
perturbed system.

This notion of stability is quite similar to the stability of trajectories
studied in \cite{GT22}. In this article the authors characterize
monotonic cellular automata such that the orbit of the trajectory
of the uniform configuration with the symbol $0$ stays near this
configuration when the perturbation parameter goes to 0. When the
cellular automata is ergodic, this notion implies the previous notion
of stochastic stability for the Dirac mass on the configuration with
only $0$s. Another notion of stability also appears in \cite{FMT22}
to study how a probabilistic cellular automaton can correct mistakes
of some tilings defined by local rules.

The first examples of CA that would seem stable are classes of CA
which converges rapidly to a fixed point: we take the example of nilpotent
CA. Another interesting case would be classes of CA with several fixed
points, but that all but one could be described as ``unstable''.
Here, we take the example of CA where a symbol is spreading, and verify
that the stochastic stability only select the ``stable'' point (with
no necessary the monotonic assumption as in \cite{FMT22}). If the
notion of stochastic stability is intuitive, proving that a particular
CA is stable may not be. In fact, we prove that it is an undecidable
property; we can draw parallel to other ``basic'' properties that
are in fact undecidable for CA, like nilpotency \cite{Kari92}.

\subsection{Description of the paper}

In section 2 we recall the basic tools for the study of cellular automata,
and define the ones for the study of their stochastic stability nature.

In section 3 and 4, we apply this notion on simple examples where
we expect stability to appear: nilpotent CA and CA with a spreading
symbol. The stable measure for those automata is very simple, as it
is the Dirac mass on a uniform configuration. Beside proving the stability
of this measure, we also show different approaches to obtain an upper
bound on the speed of convergence towards it.

The two final sections are independent of the two previous ones. In
section 5 we present proofs for computation results we use in the
following section. Those results are Proposition \ref{prop:equivalentaubord}
and \ref{prop:limiteaubord}, which gives an asymptotic development
for several functions when the noise goes to zero.\textcolor{red}{{}
}Finally, in section 6 we prove that given a CA perturbed by a standard
noise, the stochastic stability of a measure is undecidable, as stated
in Theorem \ref{thm:undecidabilite}. To prove this theorem, we simulate
a Turing machine in a construction already described in \cite{BDPST15}
and \cite{ENT22}.

\section{Stochastic stability for cellular automata}

Let $\A$ be a finite alphabet of symbols, and define $\X=\A^{\Z^{d}}$
the space of configurations of $\Z^{d}$ endowed with the product
topology. An application $F:\X\to\X$ is a cellular automaton (CA)
if there is a finite neighborhood $\mathcal{N}=\left\{ i_{1},...,i_{r}\right\} \subset\Z^{d}$
and a local rule $f:\A^{\mathcal{N}}\to\A$ such that for all $i\in\Z^{d}$,$\left(Fx\right)_{i}=f(x_{i+\mathcal{N}})$
where $x_{i+\mathcal{N}}=(x_{i+i_{1}},x_{i+i_{2}},...,x_{i+i_{r}})$. 

A transition kernel $\Phi$ is a probabilistic cellular automaton
(PCA) if there is a finite neighborhood $\mathcal{N}=\left\{ i_{1},...,i_{r}\right\} \subset\Z^{d}$
and a stochastic matrix (local rule) $\varphi:\A^{N}\times\A\to[0,1]$
such that for all $x\in\A^{\Z^{d}}$, $\Phi(x,[u]_{A})=\prod_{i\in A}\varphi(x_{i+\mathcal{N}},u_{i})$.
Moreover, it is a $\epsilon$-perturbation of a CA $F$ if they are
defined on the same alphabet, have the same neighborhood, and if their
local rules $\varphi$ and $f$ verify for all $a_{1},a_{2},...,a_{r}\in\A$,
$\varphi\left(\left(a_{1},...,a_{r}\right),f(a_{1},...,a_{r})\right)\geq1-\epsilon.$

Deterministic and probabilistic cellular automata both acts on $\M(\X)$
the set of Borel probability measures on $\X$, by $\Phi\mu(A)=\int\Phi(x,A)\diff\mu(x)$
for any $\Phi$ PCA, $\mu\in\M(\X)$ and $A$ observable. A measure
$\mu$ is $\Phi$-invariant if $\Phi\mu=\mu$. Recall that $\M(\X)$
is compact and metrizable for the weak convergence topology: $\mu_{n}\weakto n{\infty}\mu$
if $\mu_{n}([u]_{A})\tendsto n{\infty}\mu([u]_{A})$ for all cylinders
$[u]_{A}$.
\begin{defn}[Stochastic stability of a measure]
A measure $\pi\in\M(X)$ is \emph{stochastically stable} under $\left(F_{\epsilon}\right)_{\epsilon>0}$
if there exists a numerical sequence $\left(\epsilon_{n}\right)_{n\in\N}$
converging towards $0$ and a sequence $\left(\pi_{\epsilon_{n}}\right)_{n\in\N}$
verifying:
\begin{enumerate}
\item $\forall n\in\N,\pi_{\epsilon_{n}}$ is $F_{\epsilon_{n}}$-invariant.
\item $\pi_{\epsilon_{n}}\weakto n{\infty}\pi$.
\end{enumerate}
\end{defn}

\begin{defn}[Strong stochastic stability of a cellular automaton]
A cellular automaton $F$ is \emph{strongly stochastically stable}
under $\left(F_{\epsilon}\right)_{\epsilon>0}$ if it admits only
one stochastically stable measure.
\end{defn}

Observe that by definition of an $\epsilon$-perturbation and continuity
of the action of $F_{\epsilon}$ on $\M(\X)$, all stochastically
stable measures are invariant measure for $F$. 

In order to compare speeds of convergence, we use the total variation
distance on a finite observation window: for a finite set $A\subset\Z^{d}$
and two measures $\mu,\nu\in\M(\X)$, define $\norm{\mu-\nu}A\coloneqq\frac{1}{2}\sum_{u\in\A^{A}}\left|\mu\left(\left[u\right]\right)-\nu\left(\left[u\right]\right)\right|$
. If $\left(\mu_{n}\right)_{n}$ is a sequence of $\M(\X)$, the following
equivalence holds: 
\[
\mu_{n}\weakto n{\infty}\mu\Leftrightarrow\forall A\subset\Z^{d}\text{ finite,}\norm{\mu_{n}-\mu}A\tendsto n{\infty}0.
\]

Finally, for a symbol $a\in\A$, we denote by $a^{\infty}$ the configuration
$x\in\A^{\Z^{d}}$ such that $x_{i}=a$ for all $i\in\Z^{d}$. The
Dirac mass concentrated on this configuration will be denoted by $\delta_{a}$.

\section{Nilpotent CA}

The first class of CA we can study is the nilpotent ones. A cellular
automaton $F$ is said to be \emph{nilpotent} if there is a integer
$N\in\N^{*}$ such that $F^{N}$ is a constant function. By shift-invariance
a nilpotent CA admits a symbol, which we will denote by $0\in\A$,
such that $F^{N}$ is the constant function equals to $0^{\infty}$.
As they only admit $\delta_{0}$ as an invariant measure, it is immediate
that it is also stochastically stable. The authors of \cite{MST19}
prove that for small perturbations, ergodicity is conserved. We can
reuse the same kind of arguments to prove an upper bound on the speed
of convergence on a finite window of observation.
\begin{thm}[Stability for nilpotent CA]
\label{thm:Nilpotents}Let $\left(F_{\epsilon}\right)_{\epsilon>0}$
a family of $\epsilon$-perturbations of a nilpotent CA $F$ on $\Z^{d}$.
For $\epsilon$ small enough, we denote by $\pi_{\epsilon}$ the unique
invariant measure of $F_{\epsilon}$. Then there is a constant $C>0$
such that for all finite $A\subset\Z^{d}$, 
\[
\norm{\delta_{0}-\pi_{\epsilon}}A\leq1-\left(1-\epsilon\right)^{\left|A\right|C}\leq C\left|A\right|\epsilon.
\]
\end{thm}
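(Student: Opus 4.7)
The plan is to exploit the invariance of $\pi_\epsilon$ by writing $\pi_\epsilon = F_\epsilon^N \pi_\epsilon$, where $N$ is the nilpotency index of $F$, and then bound below the mass put on the cylinder $[0^A]$ by the probability that the noisy trajectory behaves exactly like the deterministic one on the relevant space-time cone. Since $F^N$ is identically $0^\infty$, any initial configuration $x$ satisfies $F^N(x)|_A = 0^A$, so the only way $F_\epsilon^N(x)|_A$ can differ from $0^A$ is if at least one perturbation occurs inside the past light cone of $A$ over the $N$ time steps.

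More precisely, I would set up the natural coupling of $F_\epsilon^N$ with $F^N$ by drawing, for each site $i$ and each time step $t \in \{1,\dots,N\}$, an independent Bernoulli variable $B_{i,t}$ of parameter at most $\epsilon$ that records whether a perturbation (i.e.\ a departure from $f$) occurs. The value $F_\epsilon^N(x)|_A$ agrees with $F^N(x)|_A = 0^A$ as soon as every $B_{i,t}$ with $(i,t)$ in the dependency cone of $A$ equals $0$. At step $t$ (going from time $t-1$ to time $t$), to guarantee the correct value at the sites which will influence $A$ at time $N$, one needs correctness on $A + (N-t)\mathcal{N}$. Summing, the total number of relevant $(i,t)$ pairs is at most
\[
\sum_{t=1}^{N} |A + (N-t)\mathcal{N}| \;\le\; |A| \sum_{k=0}^{N-1} |\mathcal{N}|^{k} \;=\; |A|\, C,
\]
with $C = \tfrac{|\mathcal{N}|^{N}-1}{|\mathcal{N}|-1}$ (or simply $C=N$ when $|\mathcal{N}|=1$). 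By independence of the Bernoulli variables, the probability that all of them vanish is at least $(1-\epsilon)^{|A|C}$, so starting from any $x$,
\[
F_\epsilon^N(\delta_x)\bigl([0^A]\bigr) \;\ge\; (1-\epsilon)^{|A|C}.
\]
Integrating against $\pi_\epsilon$ and using $F_\epsilon^N \pi_\epsilon = \pi_\epsilon$ yields $\pi_\epsilon([0^A]) \ge (1-\epsilon)^{|A|C}$.

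It then remains to translate this into a bound on the total variation distance. Since $\delta_0$ is concentrated on $0^\infty$, one has $\delta_0([u]_A) = \mathbf{1}_{u=0^A}$, and the standard identity
\[
\norm{\delta_0-\pi_\epsilon}{A} \;=\; \tfrac{1}{2}\Bigl(\bigl|1-\pi_\epsilon([0^A])\bigr| + \sum_{u\ne 0^A}\pi_\epsilon([u]_A)\Bigr) \;=\; 1-\pi_\epsilon([0^A])
\]
combined with the previous estimate gives $\norm{\delta_0-\pi_\epsilon}{A} \le 1-(1-\epsilon)^{|A|C}$. The second inequality is a direct consequence of Bernoulli's inequality $(1-\epsilon)^{|A|C}\ge 1 - |A|C\,\epsilon$. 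The existence and uniqueness of $\pi_\epsilon$ for $\epsilon$ small enough can be invoked from the ergodicity result of \cite{MST19} (the same light-cone argument shows that any two trajectories coalesce in total variation on finite windows, forcing a unique invariant measure).

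The only real subtlety is the combinatorial bookkeeping in the cone count: one has to be careful to restrict attention to the sites which actually influence $A$ at the final step rather than all sites in space-time, otherwise one would get an infinite product. Everything else is a mechanical application of invariance, independence of noise variables, and the formula for total variation on cylinders.
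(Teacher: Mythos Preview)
Your proof is correct and follows essentially the same approach as the paper: reduce $\norm{\delta_0-\pi_\epsilon}{A}$ to $1-\pi_\epsilon([0]_A)$, then use $\pi_\epsilon = F_\epsilon^N\pi_\epsilon$ together with the fact that, from any $x$, the noisy dynamics lands in $[0]_A$ whenever no error occurs in the space--time dependency cone of $A$ over $N$ steps. The only cosmetic difference is the constant: the paper takes $C=\sum_{t=0}^{N-1}|\mathcal{N}^t|$ (where $\mathcal{N}^t$ is the $t$-fold Minkowski sum), whereas you use the cruder bound $|A+k\mathcal{N}|\le |A|\cdot|\mathcal{N}|^k$ to obtain $C=\sum_{k=0}^{N-1}|\mathcal{N}|^k$, which is larger but equally admissible since the statement only asserts the existence of some $C$.
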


\begin{proof}
Denote by $[0]_{A}$ the cylinder $\left\{ x\in\mathcal{X}\mid\forall i\in A,x_{i}=0\right\} $.
One easily gets 
\begin{align*}
\norm{\delta_{0}-\pi_{\epsilon}}A & =1-\pi_{\epsilon}\left(\left[0\right]_{A}\right).
\end{align*}
Using \cite{MST19}'s notations, we denote by $\mathcal{N}^{t}$ the
neighborhood of the CA $F^{t}$, and $m_{t}\coloneqq\left|\mathcal{N}^{t}\right|$
(with $m_{0}\coloneqq1$). By definition of an $\epsilon$-perturbation,
one has $F_{\epsilon}(x,\left[Fx\right]_{A})\geq(1-\epsilon)^{\left|A\right|}$
(i.e. there is no mistake in each cell of $A$). By iterating it,
one gets 
\[
F_{\epsilon}^{N}(x,\underset{=[0]_{A}}{\underbrace{\left[F^{N}x\right]_{A}}})\geq\left(\prod_{t=0}^{N-1}(1-\epsilon)^{m_{t}}\right)^{|A|}=(1-\epsilon)^{|A|\cdot\sum_{t=0}^{N-1}m_{t}}
\]
 (i.e. for each points of $A$, there is no mistake in its neighborhood
for the last $N$ iterations, i.e. on the points inside the dotted
area on Figure \ref{fig:Nilpotents}). 

\begin{figure}[h]
\begin{centering}
\def\svgwidth{\columnwidth * 4/5}
\begingroup%
  \makeatletter%
  \providecommand\color[2][]{%
    \errmessage{(Inkscape) Color is used for the text in Inkscape, but the package 'color.sty' is not loaded}%
    \renewcommand\color[2][]{}%
  }%
  \providecommand\transparent[1]{%
    \errmessage{(Inkscape) Transparency is used (non-zero) for the text in Inkscape, but the package 'transparent.sty' is not loaded}%
    \renewcommand\transparent[1]{}%
  }%
  \providecommand\rotatebox[2]{#2}%
  \newcommand*\fsize{\dimexpr\f@size pt\relax}%
  \newcommand*\lineheight[1]{\fontsize{\fsize}{#1\fsize}\selectfont}%
  \ifx\svgwidth\undefined%
    \setlength{\unitlength}{568.30125704bp}%
    \ifx\svgscale\undefined%
      \relax%
    \else%
      \setlength{\unitlength}{\unitlength * \real{\svgscale}}%
    \fi%
  \else%
    \setlength{\unitlength}{\svgwidth}%
  \fi%
  \global\let\svgwidth\undefined%
  \global\let\svgscale\undefined%
  \makeatother%
  \begin{picture}(1,0.4729036)%
    \lineheight{1}%
    \setlength\tabcolsep{0pt}%
    \put(0,0){\includegraphics[width=\unitlength,page=1]{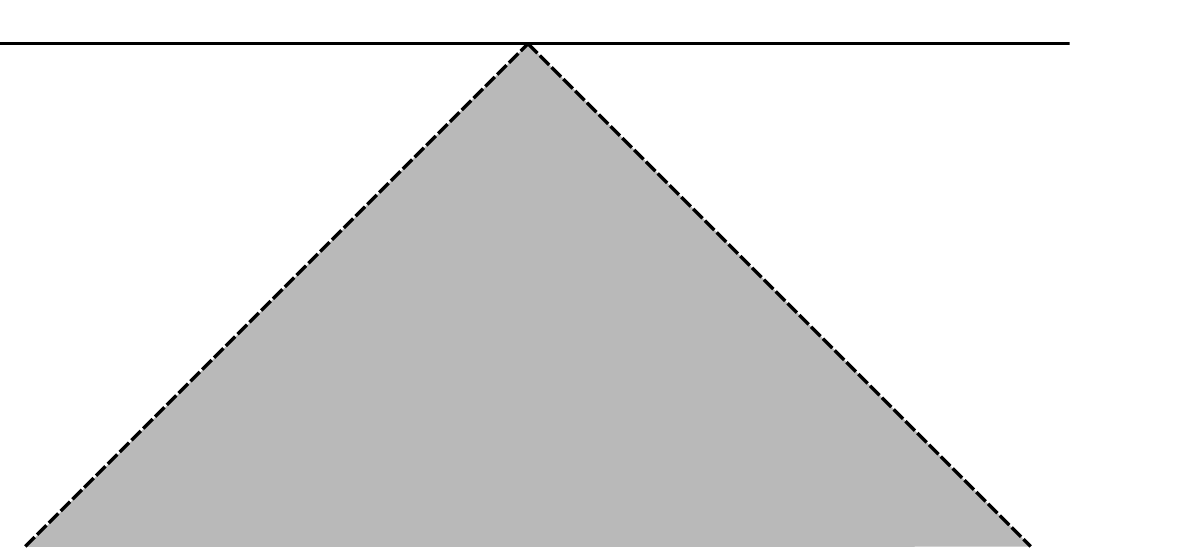}}%
    \put(0.94524522,0.42894726){\color[rgb]{0,0,0}\makebox(0,0)[t]{\lineheight{1.25}\smash{\begin{tabular}[t]{c}$t=0$\\\end{tabular}}}}%
    \put(0.44651188,0.4464076){\color[rgb]{0,0,0}\makebox(0,0)[t]{\lineheight{1.25}\smash{\begin{tabular}[t]{c}$0$\end{tabular}}}}%
    \put(0,0){\includegraphics[width=\unitlength,page=2]{nilpotent.pdf}}%
    \put(0.9523732,0.00406343){\color[rgb]{0,0,0}\makebox(0,0)[t]{\lineheight{1.25}\smash{\begin{tabular}[t]{c}$t=-N$\\\end{tabular}}}}%
    \put(0,0){\includegraphics[width=\unitlength,page=3]{nilpotent.pdf}}%
    \put(0.44568871,0.20254149){\color[rgb]{0,0.44313725,0}\makebox(0,0)[t]{\lineheight{1.25}\smash{\begin{tabular}[t]{c}$m_t$\end{tabular}}}}%
  \end{picture}%
\endgroup%

\par\end{centering}
\caption{\label{fig:Nilpotents}Proof of theorem \ref{thm:Nilpotents}. To
have a $0$ at $t=0$, it suffices to not make any mistake on the
cells inside the grayed area.}
\end{figure}

By $\pi_{\epsilon}$-invariance, one gets
\begin{align*}
\pi_{\epsilon}([0]_{A}) & =F_{\epsilon}^{N}\pi_{\epsilon}([0]_{A})\\
 & =\int F_{\epsilon}^{N}(x,[0]_{A})\diff\pi_{\epsilon}(x)\\
 & \geq(1-\epsilon)^{|A|\cdot\sum_{t=0}^{N-1}m_{t}}
\end{align*}
and then our result with $C\coloneqq\sum_{t=0}^{N-1}m_{t}$ (independent
of $A$).
\end{proof}

\section{Spreading CA}

A cellular automaton $F$ admits $0\in\A$ as a \emph{spreading state}
if it verifies for all $i\in\Z^{d}$ one has: $$\left[\exists j\in\mathcal{N},x_{i+j}=0\right]\Rightarrow(Fx)_{i}=0.$$

\begin{example}
The CA $F(x)_{i}=x_{i}\cdot x_{i+1}$ defined on $\left\{ -1,0,1\right\} ^{\Z}$
admits $0$ as a spreading state.
\end{example}

Contrary to a nilpotent CA, such an automaton can have several fixed
points: in particular, $\delta_{0}$ is not necessarily the only invariant
measure. However, it is very intuitive to think that, as long as they
can appear, the $0$ will spread on the grid, and the measure we can
observe are thus near $\delta_{0}$. For that reason, we consider
perturbations that are $0$-positive: for any $a_{1},a_{2},...,a_{r}\in\A$,
$\varphi\left(a_{1},...,a_{r}\right)(0)>0$.

If $\A$ is endowed with an order (e.g. $\A=\left\{ 0<1<...<n\right\} $),
such CA can be thought as having similar properties as \emph{monotonous
eroders}, as defined and studied in \cite{Toom80,GT22}. In those
articles, the author studied the stability of the trajectory beginning
at $x=0^{\infty}$. The monotonous eroders CA having this trajectory
stable are called \emph{stable}. It is easy to prove that, if generalizing
this definition of stability to all CA, a stable CA which is ergodic
when perturbed admits $\delta_{0}$ as its unique stochastically stable
measure.

We present two different approaches for different cases: in the first
one, we prove the stochastic stability of $\delta_{0}$ under any
$0$-positive perturbation for 1-dimensional CA admitting $0$ as
a spreading state. As we do not only consider monotonous CA, this
is not an application of \cite{GT22}. In the second one, we prove
the stability for any dimension, but only for a binary alphabet $\A=\left\{ 0,1\right\} $
and a more restrictive class of noise. Here, all spreading CA on a
binary alphabet are monotonous, and thus the stability of $\delta_{0}$
is a consequence of the results in \cite{Toom80}. However, the proof
we propose is based on the computations of \cite{MST19}, which also
provide a speed of convergence in certain cases. 

\subsection{1-dimensional}

In this part, we only consider 1-dimensional CA, i.e. defined on $\A^{\Z}$.

\begin{thm}
\label{thm:Spreading-general}Let $F$ be a CA on $\X=\A^{\Z}$ with
neighborhood $\mathcal{N}$ an interval of $\Z$ with length $\left|\mathcal{N}\right|=r$
admitting $0$ as a spreading state, and $\left(F_{\epsilon}\right)_{\epsilon>0}$
a family of $0$-positive $\epsilon$-perturbations. For all $\epsilon>0$,
let $\pi_{\epsilon}\in\M_{\epsilon}$. Then for all finite interval
$A\subset\Z$ , there is a constant $C_{\left|A\right|}$ such that
\[
\norm{\delta_{0}-\pi_{\epsilon}}A\leq1-\left(1-\epsilon\right)^{C_{\left|A\right|}}\cdot\left(1-\frac{27\epsilon}{1-27\epsilon}\right)\underset{\epsilon\to0}{\sim}\left(C_{\left|A\right|}+27\right)\epsilon.
\]
\end{thm}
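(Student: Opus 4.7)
I would first rewrite $\norm{\delta_{0}-\pi_{\epsilon}}A=1-\pi_{\epsilon}([0]_{A})$ and aim to lower bound $\pi_{\epsilon}([0]_{A})$. The blueprint echoes the nilpotent proof, but $F$ is no longer eventually constant, so "no mistake in a cone" cannot by itself force $0$s onto $A$; one must first find an actual $0$ somewhere in the past and then propagate it forward via the spreading rule. The argument therefore splits into a deterministic forward-cone step and a probabilistic estimate on the density of $0$s under $\pi_\epsilon$.

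\textbf{Cone step.} Because $\mathcal{N}$ is an interval of length $r$, a single $0$ at a well-chosen cell $(j_{0},-T)$, with $T$ of order $|A|/(r-1)$, deterministically spreads along a forward cone $\mathcal{C}$ of cardinality $C_{|A|}$ polynomial in $|A|$, covering every cell of $A$ at time $0$ provided no perturbation occurs in $\mathcal{C}$. Conditionally on $x^{(-T)}$, the perturbations at distinct cells of $\mathcal{C}$ are independent, each with no-mistake probability at least $1-\epsilon$. Integrating against $\pi_{\epsilon}$ and using its invariance (after possibly averaging over $\Z$-shifts, which preserves $F_{\epsilon}$-invariance and gives translation invariance) one gets
\[
\pi_{\epsilon}([0]_{A}) \;\geq\; (1-\epsilon)^{C_{|A|}}\,\pi_{\epsilon}\bigl(\{x:x_{j_{0}}=0\}\bigr) \;=\; (1-\epsilon)^{C_{|A|}}(1-q),
\]
where $q:=\pi_{\epsilon}(\{x:x_{0}\neq 0\})$.

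\textbf{Bounding the density $q$.} What remains is to prove $q\leq \frac{27\epsilon}{1-27\epsilon}$. The one-step contrapositive of spreading gives the key bound
\[
P\bigl(x_{0}^{(0)}\neq 0 \,\big|\, x^{(-1)}\bigr) \;\leq\; \epsilon + \mathbb{1}\!\left[\forall m\in\mathcal{N},\ x_{m}^{(-1)}\neq 0\right],
\]
so iterating backwards produces a branching \emph{non-$0$ ancestry} rooted at $(0,0)$: at each internal vertex either a mistake occurs (leaf, factor $\leq\epsilon$) or all $r$ cells at the previous time are non-$0$ and the genealogy fans out $r$-fold. Expanding $\{x_{0}^{(0)}\neq 0\}$ as a union over finite ancestry subtrees, using independence of mistakes at distinct spacetime cells, and bounding the number $N_{n}$ of admissible subtrees with $n$ leaves by an explicit 1D Peierls-type enumeration matched to the interval neighborhood, yields $q\leq\sum_{n\geq 1}N_{n}\epsilon^{n}\leq\sum_{n\geq 1}(27\epsilon)^{n}=\frac{27\epsilon}{1-27\epsilon}$. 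Combining with the cone step recovers the theorem, and the asymptotic $(C_{|A|}+27)\epsilon$ falls out of a first-order expansion at $\epsilon=0$.

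\textbf{Main obstacle.} The delicate step is the combinatorial bound $N_{n}\leq 27^{n}$. The naive unfolding gives a DAG rather than a tree---distinct downward branches of the $r$-ary ancestry can meet at the same spacetime cell---so one must either identify each realization with a canonical subtree or accept a looser but still geometric bound, choosing the enumeration so that the common ratio $27\epsilon$ absorbs all the overlap losses. A secondary technical point is dealing with the possibility of an infinite ancestry (no mistake ever in the genealogy); this has $\pi_\epsilon$-probability zero by a Borel--Cantelli/stationarity argument but must be justified. A final minor caveat is translation invariance of $\pi_{\epsilon}$, which is not assumed but can be restored by averaging over $\Z$-shifts without losing $F_{\epsilon}$-invariance.
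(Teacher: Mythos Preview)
Your cone step is correct and matches the paper's reduction $\pi_\epsilon([0]_A)\geq(1-\epsilon)^{C_{|A|}}\pi_\epsilon([0]_0)$. The gap is in the second step, and it is not a ``secondary technical point'' but the crux of the proof.

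Your ``infinite ancestry'' is not the event that no mistake occurs anywhere in the backward light cone; it is the event that \emph{some} backward path from $(0,0)$ never meets a mistake. These are very different: the first is indeed null, but the second is precisely the event ``there is an infinite open path ending at $O$'' in the paper's spread graph, and the paper's contour corollary shows it has probability at least $1-\frac{27\epsilon}{1-27\epsilon}$, i.e.\ close to $1$. Hence for small $\epsilon$ your ancestry tree is infinite with high probability, and the expansion $q\leq\sum_{n\geq1}N_n\epsilon^n$ over finite trees misses essentially the entire event $\{x_0^{(0)}\neq0\}$. No Borel--Cantelli or stationarity argument can repair this: in the stationary process, on the (typical) event that an infinite mistake-free backward path exists, $x_0^{(0)}\neq0$ merely forces every cell along that path to be non-$0$, and nothing in your setup rules that out.

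The paper closes this gap by importing uniform ergodicity and coupling-from-the-past from \cite{MST19} (Theorem~3.11 and Proposition~3.3 there): there is a random finite $T$ such that $\Psi^T(\cdot;U^{-T},\dots,U^{0})_0$ is almost surely constant in the initial configuration, so one may evaluate it at $0^\infty$. Starting from $0^\infty$, the infinite open path is no longer an obstruction but the very mechanism that forces $x_0^{(0)}=0$, since a $0$ from level $-T$ rides up along it. What can prevent $x_0^{(0)}=0$ is only the \emph{complementary} event---no open path, equivalently a closed dual contour around $O$---and the $27=3^3$ arises from counting self-avoiding contours in the planar dual of the spread graph, not from counting ancestry subtrees. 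The ergodicity input is therefore essential, not cosmetic.
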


\subsubsection{Spread graph}

The following paragraph is adapted from the ideas one can read in
more details in \cite{Toom01} and \cite{FT08}. Let us describe what
is a \emph{spread graph}.\textcolor{red}{{} }We construct it in three
steps, illustrated in Figure \ref{fig:ConstructionGraphe}:
\begin{enumerate}
\item Consider the (infinite) dependency graph of the cell $O$, at position
$0$ and time $t=0$, for a CA with neighborhood $\mathcal{N}=\left\{ 0,...,r-1\right\} $,
tilted to keep symmetry. 
\item In order to use tools for planar graph, each step of the CA is decomposed
into $r-1$ steps of a CA with neighborhood $\left\{ 0,1\right\} $.
Its definition does not matter as we are only considering the spread
of the symbol $0$.
\item To represent the noise, each vertex corresponding to a ``true cell''
at time $t\in\Z^{-}$ and position $i\in\N$ is split into two vertices,
linked with an edge $e(t,i)$. They are always open in the \emph{down}
direction, but are only open in the \emph{up} direction with a probability
greater than $1-\epsilon$, when there is no error in the cell $i$
at time $t$.
\end{enumerate}
\begin{figure}
\begin{subfigure}[b]{0.32\textwidth}         \resizebox{\linewidth}{!}{             \begin{tikzpicture}[ err/.style={circle, draw=red!60, very thick, minimum size=3mm}, cell/.style={circle, draw=blue!60, very thick, minimum size=3mm}, carre/.style={rectangle, draw=red!60, very thick, minimum size=3mm}, ]         \node[cell] at (5,6) (2 0) {$O$};         \node[cell] at (1,0) (0 0) {};         \node[cell] at (3,0) (0 1) {};         \node[cell] at (5,0) (0 2) {};         \node[cell] at (7,0) (0 3) {};         \node[cell] at (9,0) (0 4) {};         \node[cell] at (3,3) (1 0) {};         \node[cell] at (5,3) (1 1) {};         \node[cell] at (7,3) (1 2) {};
        \draw[->] (0 0) -- (1 0);         \draw[->] (0 1) -- (1 0);         \draw[->] (0 2) -- (1 0);         \draw[->] (0 1) -- (1 1);         \draw[->] (0 2) -- (1 1);         \draw[->] (0 3) -- (1 1);         \draw[->] (0 2) -- (1 2);         \draw[->] (0 3) -- (1 2);         \draw[->] (0 4) -- (1 2);         \draw[->] (1 0) -- (2 0);         \draw[->] (1 1) -- (2 0);         \draw[->] (1 2) -- (2 0);                  \end{tikzpicture}         }     \end{subfigure}     \begin{subfigure}[b]{0.32\textwidth}         \resizebox{\linewidth}{!}{             \begin{tikzpicture}[ err/.style={circle, draw=red!60, very thick, minimum size=3mm}, cell/.style={circle, draw=blue!60, very thick, minimum size=3mm}, carre/.style={rectangle, draw=black!60, very thick, minimum size=3mm}, ]         \node[cell] at (1,0) (0 0) {};         \node[cell] at (3,0) (0 1) {};         \node[cell] at (5,0) (0 2) {};         \node[cell] at (7,0) (0 3) {};         \node[cell] at (9,0) (0 4) {};                  \node[carre] at (2,1) (0 01) {};         \node[carre] at (4,1) (0 12) {};         \node[carre] at (6,1) (0 23) {};         \node[carre] at (8,1) (0 34) {};                  \node[cell] at (3,3) (1 0) {};         \node[cell] at (5,3) (1 1) {};         \node[cell] at (7,3) (1 2) {};                  \node[carre] at (4,4) (1 01) {};         \node[carre] at (6,4) (1 12) {};                  \node[cell] at (5,6) (2 0) {$O$};                  \draw[->] (0 0) -- (0 01);         \draw[->] (0 1) -- (0 01);         \draw[->] (0 1) -- (0 12);         \draw[->] (0 2) -- (0 12);         \draw[->] (0 2) -- (0 23);         \draw[->] (0 3) -- (0 23);         \draw[->] (0 3) -- (0 34);         \draw[->] (0 4) -- (0 34);                  \draw[->] (0 01) -- (1 0);         \draw[->] (0 12) -- (1 0);         \draw[->] (0 12) -- (1 1);         \draw[->] (0 23) -- (1 1);         \draw[->] (0 23) -- (1 2);         \draw[->] (0 34) -- (1 2);                  \draw[->] (1 0) -- (1 01);         \draw[->] (1 1) -- (1 01);         \draw[->] (1 1) -- (1 12);         \draw[->] (1 2) -- (1 12);                  \draw[->] (1 01) -- (2 0);         \draw[->] (1 12) -- (2 0);         \end{tikzpicture}         }     \end{subfigure}     \begin{subfigure}[b]{0.32\textwidth}         \resizebox{\linewidth}{!}{             \begin{tikzpicture}[ err/.style={circle, draw=red!60, very thick, minimum size=3mm}, cell/.style={circle, draw=blue!60, very thick, minimum size=3mm}, carre/.style={rectangle, draw=black!60, very thick, minimum size=3mm}, ]         \node[cell] at (1,0) (0 0) {};         \node[cell] at (3,0) (0 1) {};         \node[cell] at (5,0) (0 2) {};         \node[cell] at (7,0) (0 3) {};         \node[cell] at (9,0) (0 4) {};                  \node[carre] at (2,1) (0 01) {};         \node[carre] at (4,1) (0 12) {};         \node[carre] at (6,1) (0 23) {};         \node[carre] at (8,1) (0 34) {};                  \node[err] at (3,2) (1 0p) {};         \node[err] at (5,2) (1 1p) {};         \node[err] at (7,2) (1 2p) {};                  \node[cell] at (3,3) (1 0) {};         \node[cell] at (5,3) (1 1) {};         \node[cell] at (7,3) (1 2) {};                  \node[carre] at (4,4) (1 01) {};         \node[carre] at (6,4) (1 12) {};                  \node[err] at (5,5) (2 0p) {};                  \node[cell] at (5,6) (2 0) {$O$};                  \draw[->] (0 0) -- (0 01);         \draw[->] (0 1) -- (0 01);         \draw[->] (0 1) -- (0 12);         \draw[->] (0 2) -- (0 12);         \draw[->] (0 2) -- (0 23);         \draw[->] (0 3) -- (0 23);         \draw[->] (0 3) -- (0 34);         \draw[->] (0 4) -- (0 34);                  \draw[->] (0 01) -- (1 0p);         \draw[->] (0 12) -- (1 0p);         \draw[->] (0 12) -- (1 1p);         \draw[->] (0 23) -- (1 1p);         \draw[->] (0 23) -- (1 2p);         \draw[->] (0 34) -- (1 2p);                  \draw[->, dashed, color = red] (1 0p) -- (1 0);         \draw[->, dashed, color = red] (1 1p) -- (1 1);         \draw[->, dashed, color = red] (1 2p) -- (1 2);                  \draw[->] (1 0) -- (1 01);         \draw[->] (1 1) -- (1 01);         \draw[->] (1 1) -- (1 12);         \draw[->] (1 2) -- (1 12);                  \draw[->] (1 01) -- (2 0p);         \draw[->] (1 12) -- (2 0p);                  \draw[->, dashed, color = red] (2 0p) -- (2 0);                  \end{tikzpicture}  }     \end{subfigure}

\caption{\label{fig:ConstructionGraphe}Left: step 1, the first three levels
of the initial dependency graph for $r=3$.\protect \\
Center: step 2, the decomposition into a planar graph.\protect \\
Right: step 3, adding the noise.}
\end{figure}
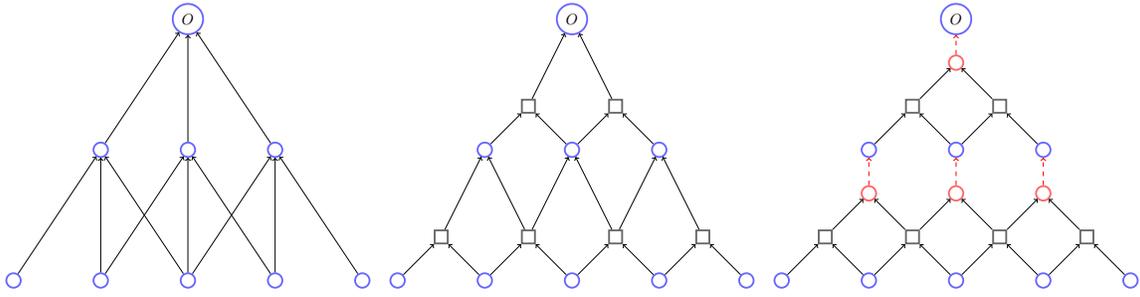

\begin{defn}
The spread graph associated to $\left(U_{i}^{-t}\right)_{i,t\in\Z}=\left(U^{-t}\right)_{t\in\Z}$,
a collection of independent uniform variables on $\left[0,1\right]$,
is the spread graph where the edge $e(-t,i)$ is open in the \emph{up}
direction when $U_{i}^{-t}>\epsilon$.
\end{defn}

The tilted edges, that represent the spread of the symbol $0$ by
the deterministic cellular automaton, are always open in the \emph{up}
direction and closed in the \emph{down} direction. What is the probability
to have a infinite open path ending the top vertex $O$ ? To answer
it, consider the dual of this planar graph as in figure \ref{fig:SpreadingDual}
\begin{figure}
\resizebox{\linewidth}{!}{     \begin{tikzpicture}[ err/.style={circle, draw=red!60, very thick, minimum size=3mm}, cell/.style={circle, draw=blue!60, very thick, minimum size=3mm}, carre/.style={rectangle, draw=black!60, very thick, minimum size=3mm}, ]         \node[cell] at (1,0) (0 0) {};         \node[cell] at (3,0) (0 1) {};         \node[cell] at (5,0) (0 2) {};         \node[cell] at (7,0) (0 3) {};         \node[cell] at (9,0) (0 4) {};                  \node[carre] at (2,1) (0 01) {};         \node[carre] at (4,1) (0 12) {};         \node[carre] at (6,1) (0 23) {};         \node[carre] at (8,1) (0 34) {};                  \node[err] at (3,2) (1 0p) {};         \node[err] at (5,2) (1 1p) {};         \node[err] at (7,2) (1 2p) {};                  \node[cell] at (3,3) (1 0) {};         \node[cell] at (5,3) (1 1) {};         \node[cell] at (7,3) (1 2) {};                  \node[carre] at (4,4) (1 01) {};         \node[carre] at (6,4) (1 12) {};                  \node[err] at (5,5) (2 0p) {};                  \node[cell] at (5,6) (2 0) {$O$};                  \draw[->] (0 0) -- (0 01);         \draw[->] (0 1) -- (0 01);         \draw[->] (0 1) -- (0 12);         \draw[->] (0 2) -- (0 12);         \draw[->] (0 2) -- (0 23);         \draw[->] (0 3) -- (0 23);         \draw[->] (0 3) -- (0 34);         \draw[->] (0 4) -- (0 34);                  \draw[->] (0 01) -- (1 0p);         \draw[->] (0 12) -- (1 0p);         \draw[->] (0 12) -- (1 1p);         \draw[->] (0 23) -- (1 1p);         \draw[->] (0 23) -- (1 2p);         \draw[->] (0 34) -- (1 2p);                  \draw[->, dashed, color = red] (1 0p) -- (1 0);         \draw[->, dashed, color = red] (1 1p) -- (1 1);         \draw[->, dashed, color = red] (1 2p) -- (1 2);                  \draw[->] (1 0) -- (1 01);         \draw[->] (1 1) -- (1 01);         \draw[->] (1 1) -- (1 12);         \draw[->] (1 2) -- (1 12);                  \draw[->] (1 01) -- (2 0p);         \draw[->] (1 12) -- (2 0p);                  \draw[->, dashed, color = red] (2 0p) -- (2 0);
        \node[cell] at (10,-0.5) (d0 0) {};         \node[cell] at (12,-0.5) (d0 1) {};         \node[cell] at (14,-0.5) (d0 2) {};         \node[cell] at (16,-0.5) (d0 3) {};         \node[cell] at (18,-0.5) (d0 4) {};         \node[cell] at (20,-0.5) (d0 5) {};                  \node[cell] at (11,1) (d1 0) {};         \node[cell] at (13,1) (d1 1) {};         \node[cell] at (15,1) (d1 2) {};         \node[cell] at (17,1) (d1 3) {};         \node[cell] at (19,1) (d1 4) {};                  \node[cell] at (12,2.5) (d2 0) {};         \node[cell] at (14,2.5) (d2 1) {};         \node[cell] at (16,2.5) (d2 2) {};         \node[cell] at (18,2.5) (d2 3) {};                  \node[cell] at (13,4) (d3 0) {};         \node[cell] at (15,4) (d3 1) {};         \node[cell] at (17,4) (d3 2) {};                  \node[cell] at (14,5.5) (d4 0) {};         \node[cell] at (16,5.5) (d4 1) {};                  \node[cell] at (15,7) (d5 0) {};         \node at (15,6)  {$O$};        
        \draw[->, dashed, color = red] (d0 0) -- (d0 1);         \draw[->, dashed, color = red] (d0 1) -- (d0 2);         \draw[->, dashed, color = red] (d0 2) -- (d0 3);         \draw[->, dashed, color = red] (d0 3) -- (d0 4);         \draw[->, dashed, color = red] (d0 4) -- (d0 5);                  \draw[->, dashed, color = red] (d2 0) -- (d2 1);         \draw[->, dashed, color = red] (d2 1) -- (d2 2);         \draw[->, dashed, color = red] (d2 2) -- (d2 3);                  \draw[->, dashed, color = red] (d4 0) -- (d4 1);
        \draw[->] (d5 0) -- (d4 0);         \draw[->] (d4 0) -- (d3 0);         \draw[->] (d3 0) -- (d2 0);         \draw[->] (d2 0) -- (d1 0);         \draw[->] (d1 0) -- (d0 0);                  \draw[->] (d4 1) -- (d3 1);         \draw[->] (d3 1) -- (d2 1);         \draw[->] (d2 1) -- (d1 1);         \draw[->] (d1 1) -- (d0 1);                  \draw[->] (d3 2) -- (d2 2);         \draw[->] (d2 2) -- (d1 2);         \draw[->] (d1 2) -- (d0 2);                  \draw[->] (d2 3) -- (d1 3);         \draw[->] (d1 3) -- (d0 3);                  \draw[->] (d1 4) -- (d0 4);                  \draw[->] (d0 5) -- (d1 4);         \draw[->] (d1 4) -- (d2 3);         \draw[->] (d2 3) -- (d3 2);         \draw[->] (d3 2) -- (d4 1);         \draw[->] (d4 1) -- (d5 0);                  \draw[->] (d0 4) -- (d1 3);         \draw[->] (d1 3) -- (d2 2);         \draw[->] (d2 2) -- (d3 1);         \draw[->] (d3 1) -- (d4 0);                  \draw[->] (d0 3) -- (d1 2);         \draw[->] (d1 2) -- (d2 1);         \draw[->] (d2 1) -- (d3 0);                  \draw[->] (d0 2) -- (d1 1);         \draw[->] (d1 1) -- (d2 0);                  \draw[->] (d0 1) -- (d1 0);                       \end{tikzpicture}     }

\caption{\label{fig:SpreadingDual}Left: The first three levels of the original
graph. Right: the first three levels of the the dual graph. Note that
the outer vertices actually represent the same region of the original
graph.}
\end{figure}
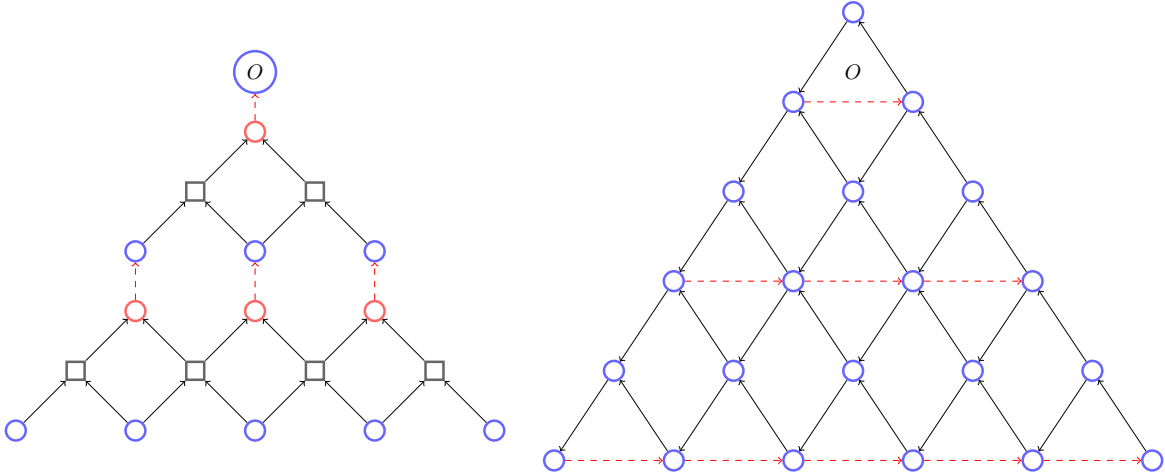
(for a complete definition see \cite{FT08}). Each edge $e$ has a
dual edge $e^{\prime}$. For each direction of $e$, the corresponding
direction on $e^{\prime}$ is the direction from left to right when
we go along $e$ in the given direction. Every edge $e^{\prime}$
is open in a direction if and only if $e$ is closed in the corresponding
direction. For our graph, the following table gives the results. 

\begin{table}[H]
\centering{}%
\begin{tabular}{|c|c|c|c|}
\hline 
 & Original graph & Dual graph & \tabularnewline
\hline 
\hline 
\textcolor{red}{$\uparrow$} & Probability $\geq1-\epsilon$ to be open & Probability $\leq\epsilon$ to be open & \textcolor{red}{$\rightarrow$}\tabularnewline
\hline 
$\downarrow$ & Always open & Always closed & $\leftarrow$\tabularnewline
\hline 
$\nearrow$ & Always open & Always closed & $\searrow$\tabularnewline
\hline 
$\swarrow$ & Always closed & Always open & $\nwarrow$\tabularnewline
\hline 
$\nwarrow$ & Always open & Always closed & $\nearrow$\tabularnewline
\hline 
$\searrow$ & Always closed & Always open & $\swarrow$\tabularnewline
\hline 
\end{tabular}
\end{table}

\begin{lem}[{\cite[Main lemma]{FT08}}]
There is an (infinite) open path ending in $O$ if and only if there
is no open self-avoiding contour in the dual graph leaving $O$ on
the left. 
\end{lem}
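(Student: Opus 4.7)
The plan is to prove this as a directed-planar analogue of Kesten's classical duality for bond percolation on $\Z^{2}$. Let $S$ denote the (random) set of primal vertices from which there exists an open directed path ending at $O$; by construction $O \in S$.

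For the easy direction, I assume that $\gamma$ is an infinite open directed path ending at $O$ and take any self-avoiding dual contour $C$ leaving $O$ on the left. The contour $C$ encloses a bounded planar region containing $O$, while $\gamma$ escapes to infinity, so $\gamma$ must cross $C$ at some edge $e$. At the crossing, the primal edge $e$ is open in the direction $\gamma$ traverses it, so by the duality table the dual edge $e'$ is closed in the direction $C$ traverses it; hence $C$ is not open.

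For the converse, I assume no infinite open path ends at $O$, so $S$ is finite. Then every primal edge from a vertex of $S$ to a vertex outside $S$ must be closed in that outgoing direction --- otherwise the outside endpoint would extend an open path reaching $O$, contradicting the definition of $S$. Via the correspondence in the table, the duals of these boundary edges are open in their right-turning direction, and planarity forces them to assemble into a union of self-avoiding dual cycles enclosing $S$; the outer one is the desired open self-avoiding dual contour leaving $O$ on the left.

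The main obstacle will be the final gluing step: checking that the duals of the closed outgoing boundary edges of $S$, after reorientation by the right-turn rule $e \mapsto e'$, match up at each vertex into a coherent self-avoiding oriented cycle rather than a disconnected collection of arcs. This requires a short case analysis using the local in- and out-degree structure at the ``cell'' and ``square'' vertices of the spread graph (each square vertex having in-degree and out-degree two by the reduction to neighborhood $\{0,1\}$), and is the combinatorial heart of the planar-duality lemma borrowed from \cite{FT08}.
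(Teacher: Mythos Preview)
The paper does not prove this lemma at all: it is imported wholesale from \cite{FT08} as a black-box citation, so there is no in-paper argument to compare against. Your sketch follows the standard planar-duality route and is the right idea, but there is a genuine slip in the converse direction.

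You write that ``every primal edge from a vertex of $S$ to a vertex outside $S$ must be closed in that outgoing direction --- otherwise the outside endpoint would extend an open path reaching $O$.'' This has the orientation backwards. In the spread graph the directed edges point \emph{toward} $O$; an edge $v\to w$ with $v\in S$ and $w\notin S$ can perfectly well be open, since it only lets $v$ reach $w$, not $w$ reach $O$. The edges that are forced to be closed are those \emph{entering} $S$ from outside: if $u\notin S$, $v\in S$ and $u\to v$ were open, then $u$ would reach $O$ through $v$, contradicting $u\notin S$. It is the duals of \emph{these} incoming boundary edges that must be glued into the contour, and with the wrong boundary set the orientation check (``$O$ on the left'') will not come out correctly either. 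A second, smaller gap: the step ``no infinite open path ends at $O$, so $S$ is finite'' is not automatic --- it requires K\"onig's lemma, using that every vertex of the spread graph has bounded in-degree.
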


\begin{cor}
The probability to have an infinite path ending in $O$ is greater
than $1-\frac{27\epsilon}{1-27\epsilon}$.
\end{cor}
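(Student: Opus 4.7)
The plan is to combine the preceding main lemma with a Peierls-type contour argument. By that lemma, the event ``no infinite open path ends at $O$'' is precisely the event ``there exists an open self-avoiding contour in the dual graph leaving $O$ on the left'', so it suffices to bound the probability of the latter by $\frac{27\epsilon}{1-27\epsilon}$ through a union bound over all such contours.

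The first ingredient comes straight from the duality table: among the six edge directions of the dual graph, only the $\rightarrow$ edges (dual to the noisy $\uparrow$ edges of the original graph) are random, each open with probability at most $\epsilon$; of the other directions, $\nwarrow$ and $\swarrow$ are deterministically open while $\leftarrow$, $\nearrow$, $\searrow$ are deterministically closed. Consequently, for any fixed self-avoiding contour $\gamma$ containing $k$ random edges, the probability that $\gamma$ is entirely open is at most $\epsilon^{k}$, independently of its other edges.

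Next I count the contours. From any vertex of the dual graph there are at most three outgoing directions in which an edge can be open ($\rightarrow$, $\nwarrow$, $\swarrow$), so the number of self-avoiding paths of length $n$ starting at a fixed vertex is at most $3^{n}$; anchoring the contour at its first intersection with a half-line emanating from $O$ absorbs the starting-vertex multiplicity into a constant. The crucial geometric observation is that the two deterministic open directions $\nwarrow$ and $\swarrow$ both carry a leftward horizontal component, so any closed loop must compensate with $\rightarrow$ steps, forcing at least one third of its edges to be random. Assembling the union bound yields a geometric series of ratio $27\epsilon$:
\[
\Pr\!\left(\exists\text{ open self-avoiding contour around }O\right)\;\leq\;\sum_{m\geq 1}(27\epsilon)^{m}\;=\;\frac{27\epsilon}{1-27\epsilon},
\]
valid whenever $27\epsilon<1$, and the complementary probability is precisely the claimed lower bound.

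The main obstacle will be the combinatorial bookkeeping that makes the factor $27\epsilon$ sharp: one needs to group the edges of each contour into triples (two deterministic, one random, reflecting the enforced $1/3$ proportion of random edges) so that the raw estimate $3^{n}\epsilon^{n/3}$ reorganizes exactly into $(27\epsilon)^{n/3}$, and to handle the starting-vertex parameterization so that it contributes only a constant rather than a polynomial factor in $n$. Apart from this geometric-combinatorial step, the argument reduces to a routine union bound followed by summing a geometric series.
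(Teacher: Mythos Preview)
Your approach is essentially the same Peierls contour argument as the paper's. The two ``obstacles'' you flag at the end are exactly where the paper's proof is cleaner, so let me point out how it resolves them.

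First, the anchoring: in this dual graph every self-avoiding contour leaving $O$ on the left must pass through the single top vertex of the dual graph (the apex of the cone, just above $O$), so one simply declares that the contour begins and ends there. This eliminates the half-line trick entirely and gives no polynomial prefactor---the count is exactly the number of closed walks of the given length from that vertex.

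Second, the $1/3$ proportion is not merely a lower bound but an equality: since the only open directions are $\rightarrow$, $\nwarrow$, $\swarrow$, and a closed loop must have zero net displacement, the three step types occur in equal numbers. Hence a contour with $k$ random edges has length exactly $3k$, the number of such contours is at most $3^{3k}=27^{k}$, each is open with probability at most $\epsilon^{k}$, and the union bound gives $\sum_{k\ge 1}(27\epsilon)^{k}=\frac{27\epsilon}{1-27\epsilon}$ directly, with no regrouping into triples needed.
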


\begin{proof}
Let us bound the probability of existence of an open self-avoiding
contour in the dual graph. We can suppose that every contour begins
and finishes at the top cell of the dual graph. The probability that
there is such a contour is less than $\sum_{k=1}^{\infty}C_{k}\epsilon^{k}$
where $C_{k}$ is the number of contours going through $k$ horizontal
arrows\textcolor{red}{{} $\rightarrow$}. As a contour goes through
an equal number of\textcolor{red}{{} $\rightarrow$}, $\swarrow$ and
$\nwarrow$, a contour is of length $3k$. As there is at most $3$
choices of direction at each point of the dual graph, a contour can
be associated to a unique function from $\left\llbracket 1,3k\right\rrbracket $
to $\left\{ \rightarrow,\swarrow,\nwarrow\right\} $. Thus, $C_{k}\leq3^{3k}=27^{k}$,
and the probability that such a contour exists is less than $\sum_{k\geq1}\left(27\epsilon\right)^{k}=\frac{27\epsilon}{1-27\epsilon}$.
Thus, the probability to have an open path ending in $O$ is greater
than $1-\frac{27\epsilon}{1-27\epsilon}$.
\end{proof}

\subsubsection{Update functions}

To prove the theorem, we need an ergodicity property of this kind
of CA.
\begin{defn}
An update function $\underline{f}$ for the local rule $\varphi$
is a function such that for any $U\sim\text{Unif}([0,1])$ and $(a_{1},...,a_{r})\in\A^{r}$,
$P\left(\underline{f}(a_{1},...,a_{r},U)=b\right)=\varphi(a_{1},...,a_{r})(b)$.

A global update map $\Psi:\X\times[0,1]^{\Z}\to\X$ is defined as
$\Psi(x,u)_{k}=\underline{f}(x_{k+\mathcal{N}},u_{k})$. To simulate
the PCA, we can recursively define $\Psi^{t+1}:\X\times\left([0,1]^{\Z}\right)^{t}\to\X$
by $\Psi^{t+1}(x,u^{1},...,u^{t+1})=\Psi\left(\Psi^{t}(x,u^{1},...,u^{t}),u^{t+1}\right)$,
and give ourselves $\left(U_{i}^{n}\right)_{i,n\in\Z}$ independent
random variables uniformly distributed over $[0,1]$.
\end{defn}

\begin{prop}[\cite{MST19}, Theorem 3.11 and Proposition 3.3]
Let $F$ be a CA admitting $0$ as a spreading symbol. Then there
is an $\epsilon_{c}>0$ such that $\forall\epsilon<\epsilon_{c}$,
every $0$-positive $\epsilon$-perturbation of $F$ is uniformly
ergodic. Moreover, there is a $T\geq0$ defined uniquely by the $\left(U_{i}^{n}\right)_{i,n\in\Z}$
such that $x\mapsto\Psi^{T}(x;U^{-T},...,U^{-0})_{0}$ is almost surely
constant, with $\pi_{\epsilon}([\beta]_{0})=P\left(\Psi^{T}(\cdot;U^{-T},...,U^{-0})_{0}=\beta\right)$.
\end{prop}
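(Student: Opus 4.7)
The plan is to combine the spread-graph percolation estimate (Corollary above) with a Propp--Wilson coupling-from-the-past construction, yielding both the uniform ergodicity and the explicit formula for $\pi_\epsilon$ in one stroke.

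For each $t \geq 0$, consider the random map
\[
\phi_t : \X \to \A, \qquad x \mapsto \Psi^t(x;\, U^{-t+1}, \ldots, U^0)_0.
\]
The family of events $\{\phi_t \text{ is constant on } \X\}$ is non-decreasing in $t$ (once the value at $(0,0)$ has become independent of the full initial configuration, it stays so when one adds further past steps), so
\[
T := \inf\{ t \geq 0 : \phi_t \text{ is constant} \}
\]
is a well-defined random variable in $\N \cup \{+\infty\}$, measurable with respect to the noise field $(U_i^n)_{i \in \Z,\, n \leq 0}$. The core task is to show $T < \infty$ almost surely for $\epsilon$ below a threshold $\epsilon_c$. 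The spread graph dictionary translates the event $\{T \leq t\}$ into a planar percolation event: enough noise-induced $0$-resets in the dependency cone of $(0,0)$ back to time $-t$, combined with the deterministic $0$-spreading, block every path carrying information about $x$ toward $(0,0)$. Equivalently, by the Main Lemma, no open self-avoiding dual contour of the corresponding depth surrounds $(0,0)$. The Corollary bounds the probability of the complementary event by $\frac{27\epsilon}{1 - 27\epsilon}$; choosing $\epsilon_c = 1/54$ yields $P(T < \infty) > 0$, and a shift-ergodicity argument on the i.i.d.\ space-time field $(U_i^n)_{i,n \in \Z}$ upgrades this to $P(T < \infty) = 1$.

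Extend to a finite window $A \subset \Z$ by defining $T_A$ analogously; by spatial translation invariance $T_A \leq \max_{i \in A} T_i$, so $T_A$ is a.s. finite as well. Set
\[
\pi_\epsilon([\beta]_A) := P\bigl(\Psi^{T_A}(\cdot;\, U^{-T_A+1}, \ldots, U^0)_A = \beta\bigr),
\]
and extend to a Borel probability measure on $\X$ by Kolmogorov's theorem after checking consistency on nested windows. Invariance $F_\epsilon \pi_\epsilon = \pi_\epsilon$ follows by applying the time shift $U^n \mapsto U^{n+1}$: the law of $\phi_{T_A}$ is sent to the law of $\Psi(\phi_{T_A};\, U^1)_A$, which is precisely the pushforward by $F_\epsilon$. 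The single-site formula in the statement is the case $A = \{0\}$.

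Finally, for any initial probability $\mu$ and any finite $A$, couple the process started from $\mu$ with the synthetic past-reaching trajectory feeding $\pi_\epsilon$; both agree on $A$ as soon as $n \geq T_A$, giving
\[
\norm{F_\epsilon^n \mu - \pi_\epsilon}A \leq P(T_A > n),
\]
a bound independent of $\mu$ that tends to $0$ since $T_A < \infty$ a.s. This is exactly the stated uniform ergodicity. The main obstacle is the upgrade $P(T < \infty) > 0 \leadsto P(T < \infty) = 1$: one must carefully verify that the relevant event is invariant (or tail) under a suitable shift of the space-time i.i.d.\ noise, so that a zero-one law applies; once this step is justified, the remainder of the construction proceeds by routine stationarity and Markov-kernel manipulations.
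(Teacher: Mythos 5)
First, a point of comparison: the paper offers no proof of this proposition at all --- it is imported from \cite{MST19} (the coupling-from-the-past characterization is essentially their Proposition 3.3, and the almost-sure finiteness of the coalescence time for spreading CAs under $0$-positive noise is their Theorem 3.11), and the paper's spread-graph Corollary is then used \emph{after} this proposition, to evaluate the constant value by plugging in $x=0^{\infty}$. Your coupling-from-the-past skeleton is correct and matches that framework: the events $\left\{ \phi_{t}\text{ constant}\right\} $ are indeed non-decreasing, since $\phi_{t+1}(x)=\phi_{t}\left(\Psi(x,U^{-t})\right)$, so constancy propagates together with its value; the consistency, the time-shift argument for invariance, and the coupling bound $\left\Vert F_{\epsilon}^{n}\mu-\pi_{\epsilon}\right\Vert _{A}\leq P\left(T_{A}>n\right)$ giving uniform ergodicity are all standard and sound.

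The gap is in the one step that carries the actual content: $P(T<\infty)=1$. Your appeal to the Corollary misuses it. That Corollary bounds the probability that there exists an infinite \emph{error-free} path ending at $O$; this event guarantees that a $0$ already present in the initial configuration reaches the origin, which is why the paper invokes it only after constancy is known. It does not block the influence of an arbitrary $x$ on the origin: error-free sites are exactly those that apply the deterministic rule to their (possibly $x$-dependent) inputs, so an infinite error-free path is, if anything, a channel for that influence, not a barrier. The event you need is different: a noise-written $0$ (this is where $0$-positivity must enter, via an update function sending a fixed interval of $u$-values to $0$ uniformly over all neighborhood words) connected to $O$ by an error-free path, and estimating its probability is a related but genuinely distinct percolation computation. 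Moreover, your upgrade from $P(T<\infty)>0$ to $P(T<\infty)=1$ does not work as stated: since the events $\left\{ \phi_{t}\text{ constant}\right\} $ increase, $P(T<\infty)=\lim_{t}P\left(\phi_{t}\text{ constant}\right)$, and $\left\{ T<\infty\right\} $ is a single-site event, not a shift-invariant one; ergodicity of the i.i.d.\ space-time field yields a zero-one law only for genuinely shift-invariant events such as ``$T_{i}=\infty$ for some $i$'', which is compatible with $0<P(T<\infty)<1$. One really must prove $P\left(\phi_{t}\text{ constant}\right)\to1$, e.g.\ by the dichotomy: either the error-free cluster hanging from the origin is finite, in which case (for memoryless noise) every information path is screened by an error; or it is infinite, in which case almost surely some error adjacent to it writes a $0$ that the spreading property carries up the cluster to $O$. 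That argument --- the content of Theorem 3.11 of \cite{MST19} --- is precisely what you defer to a closing remark, so the central claim remains unproven.
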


Observe that in order to have $\Psi^{T}(0^{\infty};U^{-T},...,U^{-0})_{0}=0$,
it suffices to have in the spread graph defined by $\left(U^{-t}\right)_{0\leq t\leq T}$
an open path which end at the top vertex $O$ and begin at least in
the level $-T$: the symbol $0$ from this level will spread towards
the top via this open path.

\subsubsection{Proof of the theorem}
\begin{proof}[Proof of theorem \ref{thm:Spreading-general}]
Without loss of generality, we can suppose that the neighborhood
is $\mathcal{N}=\left\llbracket 0,r-1\right\rrbracket $. We denote
by $\pi_{\epsilon}$ the unique measure of $\M_{\epsilon}$. As in
the proof for the nilpotent case, the total variation distance to
a Dirac distribution is $\norm{\delta_{0}-\pi_{\epsilon}}A=1-\pi_{\epsilon}([0]_{A})$.

For any $m\in\N^{*}$, define $t_{m}=\lceil\frac{m-1}{r-1}\rceil$,
such that $A\coloneqq\llbracket a-\left|A\right|+1,a\rrbracket\subset\llbracket a-\left(r-1\right)t_{\left|A\right|},a\rrbracket\eqqcolon\tilde{A}$
and $\pi_{\epsilon}\left(\left[0\right]_{A}\right)\geq\pi_{\epsilon}\left(\left[0\right]_{\tilde{A}}\right)$.
By definition of an $\epsilon$-perturbation, we have $\pi_{\epsilon}\left(\left[0\right]_{\tilde{A}}\right)\geq\left(1-\epsilon\right)^{\left(r-1\right)t_{\left|A\right|}+1}\pi_{\epsilon}\left(\left[0\right]_{\llbracket a-\left(r-2\right)t_{\left|A\right|},a\rrbracket}\right)$.
By iterating the last inequality, we obtain

\[
\pi_{\epsilon}\left(\left[0\right]_{A}\right)\geq\pi_{\epsilon}\left(\left[0\right]_{\tilde{A}}\right)\geq\left(1-\epsilon\right)^{C_{\left|A\right|}}\pi_{\epsilon}\left(\left[0\right]_{a}\right)=\left(1-\epsilon\right)^{C_{\left|A\right|}}\pi_{\epsilon}\left(\left[0\right]_{0}\right)
\]
where $C_{|A|}=\sum_{t=1}^{t_{\left|A\right|}}\left(t(r-1)+1\right)=t_{\left|A\right|}\left(\frac{r-1}{2}\left(t_{\left|A\right|}+1\right)+1\right)$,
which corresponds to the number of cells where it suffices to not
have any mistake to be sure to have only the symbol $0$ in all the
cells of $A$ at $t=0$ (see Figure \ref{fig:Spreading}) . For $\left|A\right|\gg1$,
we have $C_{\left|A\right|}\sim\frac{\left|A\right|^{2}}{2\left(n-1\right)}$.
\begin{figure}
\centering{}\def\svgwidth{\columnwidth * 4/5}
\begingroup%
  \makeatletter%
  \providecommand\color[2][]{%
    \errmessage{(Inkscape) Color is used for the text in Inkscape, but the package 'color.sty' is not loaded}%
    \renewcommand\color[2][]{}%
  }%
  \providecommand\transparent[1]{%
    \errmessage{(Inkscape) Transparency is used (non-zero) for the text in Inkscape, but the package 'transparent.sty' is not loaded}%
    \renewcommand\transparent[1]{}%
  }%
  \providecommand\rotatebox[2]{#2}%
  \newcommand*\fsize{\dimexpr\f@size pt\relax}%
  \newcommand*\lineheight[1]{\fontsize{\fsize}{#1\fsize}\selectfont}%
  \ifx\svgwidth\undefined%
    \setlength{\unitlength}{603.59682579bp}%
    \ifx\svgscale\undefined%
      \relax%
    \else%
      \setlength{\unitlength}{\unitlength * \real{\svgscale}}%
    \fi%
  \else%
    \setlength{\unitlength}{\svgwidth}%
  \fi%
  \global\let\svgwidth\undefined%
  \global\let\svgscale\undefined%
  \makeatother%
  \begin{picture}(1,0.48625426)%
    \lineheight{1}%
    \setlength\tabcolsep{0pt}%
    \put(0,0){\includegraphics[width=\unitlength,page=1]{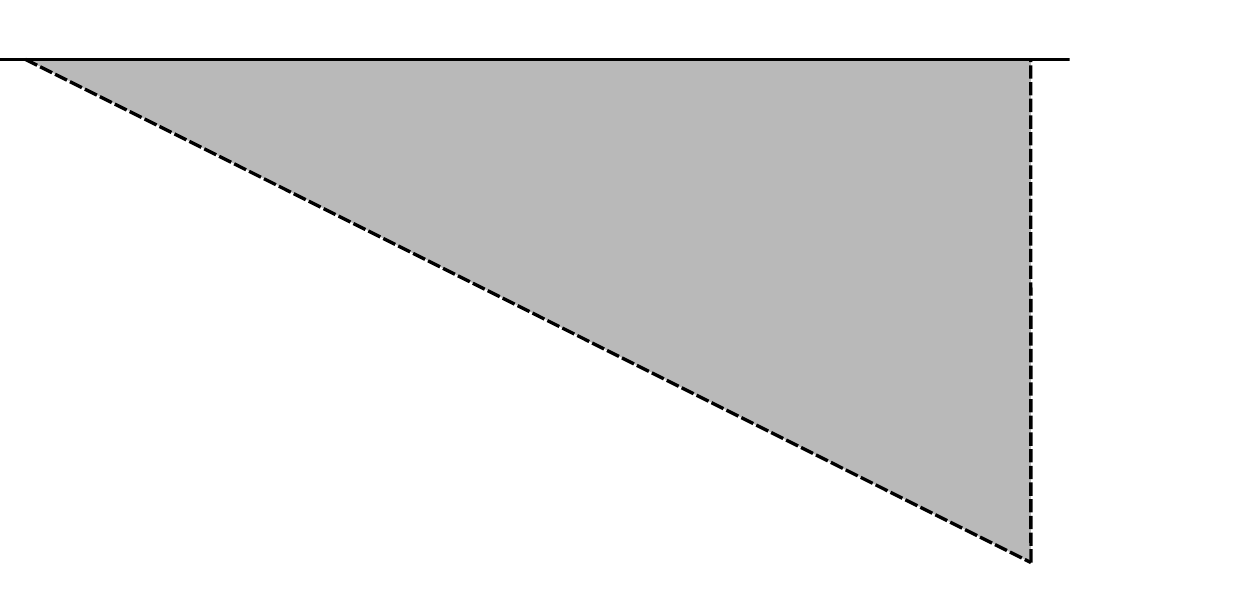}}%
    \put(0.88997162,0.43249253){\color[rgb]{0,0,0}\makebox(0,0)[t]{\lineheight{1.25}\smash{\begin{tabular}[t]{c}$t=0$\\\end{tabular}}}}%
    \put(0.81904042,0.00482582){\color[rgb]{0,0,0}\makebox(0,0)[t]{\lineheight{1.25}\smash{\begin{tabular}[t]{c}$a$\end{tabular}}}}%
    \put(0,0){\includegraphics[width=\unitlength,page=2]{spreadingConstante.pdf}}%
    \put(0.91904841,0.03245395){\color[rgb]{0,0,0}\makebox(0,0)[t]{\lineheight{1.25}\smash{\begin{tabular}[t]{c}$t=-t_{|A|}$\\\end{tabular}}}}%
    \put(0,0){\includegraphics[width=\unitlength,page=3]{spreadingConstante.pdf}}%
    \put(0.46259685,0.39937108){\color[rgb]{0,0.44313725,0}\makebox(0,0)[t]{\lineheight{1.25}\smash{\begin{tabular}[t]{c}$A$\end{tabular}}}}%
    \put(0,0){\includegraphics[width=\unitlength,page=4]{spreadingConstante.pdf}}%
    \put(0.42532027,0.46093116){\color[rgb]{0.44313725,0.00392157,0}\makebox(0,0)[t]{\lineheight{1.25}\smash{\begin{tabular}[t]{c}$\tilde{A}=\llbracket a-\left(r-1\right)t_{\left|A\right|},a\rrbracket$\end{tabular}}}}%
  \end{picture}%
\endgroup%
\caption{\label{fig:Spreading}Proof of theorem \ref{thm:Spreading-general}.
To have a $0$ when $t=0$ on all $A$, it suffices to have one in
$\left(-t_{\left|A\right|},a\right)$ and not make any mistakes on
the cells of the colored area. Time goes upward.}
\end{figure}

By the previous propositions $x\mapsto\Psi^{T}(x;...)$ is constant.
One can then only use its value on the entry $x=0^{\infty}$. 
\begin{align*}
\pi_{\epsilon}([0]_{0}) & =P\left(\Psi^{T}(\cdot;U^{-T},...,U^{-0})_{0}=0\right)\\
 & =P\left(\Psi^{T}(0^{\infty};U^{-T},...,U^{-0})_{0}=0\right)\\
 & \geq P\left(\text{There is an open path from level }-T\text{ to \ensuremath{O} in the graph defined by }\left(U^{-t}\right)_{t\geq0}\right)\\
 & \geq P\left(\text{There is an infinite open path from level ending in }O\text{ in the graph defined by }\left(U^{-t}\right)_{t\geq0}\right)\\
 & \geq1-\frac{27\epsilon}{1-27\epsilon}.
\end{align*}
where the final inequality is by percolation.
\end{proof}
\begin{example}
For the simple case $\mathcal{N}=\left\{ 0,1\right\} $, the result
is 
\[
\norm{\delta_{0}-\pi_{\epsilon}}A\leq1-\left(1-\epsilon\right)^{\frac{\left(\left|A\right|+2\right)(\left|A\right|-1)}{2}}\cdot\left(1-\frac{27\epsilon}{1-27\epsilon}\right)\underset{\epsilon\to0}{\sim}\left(\frac{\left(\left|A\right|+2\right)(\left|A\right|-1)}{2}+27\right)\epsilon.
\]
The spread graph used in the proof is much simpler (Figure \ref{fig:Percolation01}),
and is the one described in \cite{Toom01}.
\end{example}

\begin{figure}[H]
\resizebox{.5\linewidth}{!}{     \begin{tikzpicture}[ err/.style={circle, draw=red!60, very thick, minimum size=3mm}, cell/.style={circle, draw=blue!60, very thick, minimum size=3mm}, noir/.style={circle, draw=black!60, very thick, minimum size=3mm}, ]         \node[cell] at (1,0) (0 0) {};         \node[cell] at (3,0) (0 1) {};         \node[cell] at (5,0) (0 2) {};                  \node[err] at (2,.5) (1 0p) {};         \node[err] at (4,.5) (1 1p) {};                  \node[cell] at (2,1.5) (1 0) {};         \node[cell] at (4,1.5) (1 1) {};                  \node[err] at (3,2) (2 0p) {};         \node[cell] at (3,3.1) (2 0) {$O$};                  \draw[->, color = blue] (0 0) -- (1 0p);         \draw[->, color = blue] (0 1) -- (1 0p);         \draw[->, color = blue] (0 1) -- (1 1p);         \draw[->, color = blue] (0 2) -- (1 1p);         \draw[->, color = red] (1 0p) -- (1 0);         \draw[->, color = red] (1 1p) -- (1 1);         \draw[->, color = blue] (1 0) -- (2 0p);         \draw[->, color = blue] (1 1) -- (2 0p);         \draw[->, color = red] (2 0p) -- (2 0);
        \node[noir] at (0,-0.5) (d0 0) {};         \node[noir] at (2,-0.5) (d0 1) {};         \node[noir] at (4,-0.5) (d0 2) {};         \node[noir] at (6,-0.5) (d0 3) {};                  \node[noir] at (1,1) (d1 0) {};         \node[noir] at (3,1) (d1 1) {};         \node[noir] at (5,1) (d1 2) {};                  \node[noir] at (2,2.5) (d2 0) {};         \node[noir] at (4,2.5) (d2 1) {};                  \node[noir] at (3,4) (d3 0) {};         
        \draw[->, dashed, color = red] (d0 0) -- (d0 1);         \draw[->, dashed, color = red] (d0 1) -- (d0 2);         \draw[->, dashed, color = red] (d0 2) -- (d0 3);                  \draw[->, dashed, color = red] (d1 0) -- (d1 1);         \draw[->, dashed, color = red] (d1 1) -- (d1 2);                  \draw[->, dashed, color = red] (d2 0) -- (d2 1);
        \draw[->] (d3 0) -- (d2 0);         \draw[->] (d2 0) -- (d1 0);         \draw[->] (d1 0) -- (d0 0);                  \draw[->] (d2 1) -- (d1 1);         \draw[->] (d1 1) -- (d0 1);                  \draw[->] (d1 2) -- (d0 2);                  \draw[->] (d0 3) -- (d1 2);         \draw[->] (d1 2) -- (d2 1);         \draw[->] (d2 1) -- (d3 0);                  \draw[->] (d0 2) -- (d1 1);         \draw[->] (d1 1) -- (d2 0);                  \draw[->] (d0 1) -- (d1 0);                       \end{tikzpicture}     }
\centering{}\caption{\label{fig:Percolation01}Illustration for neighborhood $\mathcal{N}=\{0,1\}$.
In blue the spread graph, in black the dual graph. Vertical red arrows
have a probability $\epsilon$ to be \textquotedblleft closed\textquotedblright ,
and thus the horizontal dashed red ones have a probability $\epsilon$
to be \textquotedblleft open\textquotedblright .}
\end{figure}
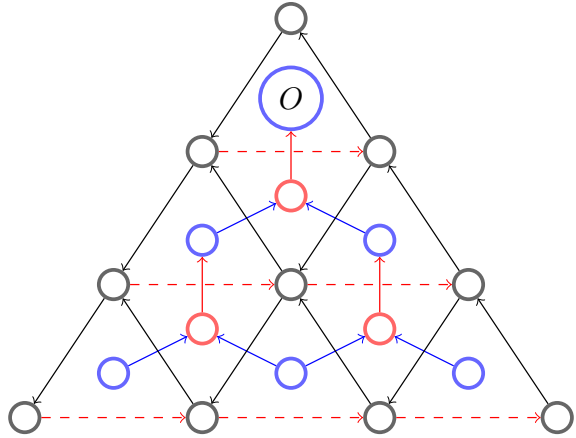

\subsection{$d$-dimensional binary \label{subsec:d-dimensional-binary}}

For $\A=\{0,1\}$, we define $F$ such that $(Fx)_{0}=\begin{cases}
0 & \text{if }x_{i}=0\text{ for any \ensuremath{i\in\mathcal{N}}}\\
1 & \text{otherwise}
\end{cases}$: the symbol $0$ is spreading. We will prove the stochastic stability
of $\delta_{0}$ using Fourier analysis. In \cite{MST19}, the authors
prove that a perturbation of $F$ by a zero-range noise is (under
certain circumstances) ergodic.
\begin{thm}
\label{thm:BinaireSpreading}Let $F$ defined as above, and $F_{\epsilon}$
a zero-range perturbation of $F$, with noise matrix $\theta_{\epsilon}=\begin{pmatrix}1-p_{\epsilon} & p_{\epsilon}\\
q_{\epsilon} & 1-q_{\epsilon}
\end{pmatrix}$ such that $p_{\epsilon}\leq\epsilon$, and $0<q_{\epsilon}\leq\epsilon$.
Let $\nu_{\epsilon}$ be the unique $F_{\epsilon}$-invariant measure
(for $\epsilon<\frac{1}{2}$). Then for all finite $A\subset\Z^{d}$,
\[
\norm{\delta_{0}-\nu_{\epsilon}}A\leq\left(2^{\left|A\right|}-1\right)\frac{p_{\epsilon}^{\left|\mathcal{N}\right|}}{q_{\epsilon}}.
\]
 In particular, $\delta_{0}$ is stochastically stable if $\frac{p_{\epsilon}^{\left|\mathcal{N}\right|}}{q_{\epsilon}}\tendsto{\epsilon}00$.
\end{thm}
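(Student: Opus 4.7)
The plan is to reduce $\norm{\delta_{0}-\nu_{\epsilon}}{A}$ to controlling the single correlation $m := \nu_{\epsilon}([1]_{\{0\}})$ via a binary Fourier / inclusion--exclusion expansion, and then to use the product form of the local rule of $F_{\epsilon}$ together with the invariance $F_{\epsilon}\nu_{\epsilon}=\nu_{\epsilon}$ to extract a closed inequality for $m$.

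First, I will observe that $\delta_{0}([0]_{A})=1$, so unpacking the total variation gives $\norm{\delta_{0}-\nu_{\epsilon}}{A}=1-\nu_{\epsilon}([0]_{A})$. On the binary alphabet the identity $\prod_{i\in A}(1-x_{i})=\sum_{B\subset A}(-1)^{|B|}\prod_{i\in B}x_{i}$ is simultaneously the Fourier expansion on $(\Z/2\Z)^{A}$ and the inclusion--exclusion formula; integrating against $\nu_{\epsilon}$ it yields
\[
1-\nu_{\epsilon}([0]_{A})=\sum_{\emptyset\neq B\subset A}(-1)^{|B|+1}\nu_{\epsilon}([1]_{B}),
\]
which the triangle inequality bounds by $\sum_{\emptyset\neq B\subset A}\nu_{\epsilon}([1]_{B})$. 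Since $F_{\epsilon}$ commutes with shifts and $\nu_{\epsilon}$ is the unique invariant measure for $\epsilon<1/2$, $\nu_{\epsilon}$ is shift invariant; together with the cylinder inclusion $[1]_{B}\subset[1]_{\{i\}}$ for any $i\in B$, this gives $\nu_{\epsilon}([1]_{B})\leq m$, and hence $\norm{\delta_{0}-\nu_{\epsilon}}{A}\leq(2^{|A|}-1)\,m$.

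The core step is then showing $m\leq p_{\epsilon}^{|\mathcal{N}|}/q_{\epsilon}$. The zero-range form of the noise gives the factorised local rule $F_{\epsilon}(x,[1]_{\{0\}})=\prod_{i\in\mathcal{N}}(p_{\epsilon}+\alpha x_{i})$ with $\alpha := 1-p_{\epsilon}-q_{\epsilon}$, so expanding, integrating against $\nu_{\epsilon}$, and applying invariance yields
\[
m=\sum_{S\subset\mathcal{N}}p_{\epsilon}^{|\mathcal{N}|-|S|}\alpha^{|S|}\,\nu_{\epsilon}([1]_{S}).
\]
Isolating the term $S=\emptyset$ (which contributes $p_{\epsilon}^{|\mathcal{N}|}$) and using $\nu_{\epsilon}([1]_{S})\leq m$ for $S\neq\emptyset$, then recollapsing the remaining binomial, leads to $m\leq p_{\epsilon}^{|\mathcal{N}|}+m\bigl[(1-q_{\epsilon})^{|\mathcal{N}|}-p_{\epsilon}^{|\mathcal{N}|}\bigr]$; rearranging and applying the elementary Bernoulli bound $1-(1-q_{\epsilon})^{|\mathcal{N}|}\geq q_{\epsilon}$ finishes the estimate.

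The main obstacle is precisely this closed recursion: it works only because the local rule factorises as a single product over $\mathcal{N}$ (a consequence of the binary spreading-$0$ hypothesis coupled with the zero-range noise), so every correlation $\nu_{\epsilon}([1]_{S})$ appearing in the expansion is again dominated by $m$ itself and the binomial recollapses cleanly. Without this factorisation the expansion would involve correlations on the geometrically complicated sets $\bigcup_{i\in S}(i+\mathcal{N})$ and no clean closed bound would drop out; once $m\leq p_{\epsilon}^{|\mathcal{N}|}/q_{\epsilon}$ is established, the stability consequence $p_{\epsilon}^{|\mathcal{N}|}/q_{\epsilon}\to 0\Rightarrow\nu_{\epsilon}\to\delta_{0}$ follows immediately.
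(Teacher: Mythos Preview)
Your proof is correct, and it takes a genuinely different route from the paper's. Both rely on the same factorisation identity $F_\epsilon\chi_A=\sum_{I}p_\epsilon^{|\,\cdot\,|}\alpha^{|I|}\chi_I$ (the paper's Proposition~4.7 from \cite{MST19}), and both reduce the total-variation bound to the semi-norm $\langle\langle\mathbf{1}_{[0]_A}\rangle\rangle=2^{|A|}-1$; the difference lies in how the factorisation is exploited. The paper runs a \emph{dynamical} argument: it iterates $h^t=F_\epsilon^t h$ starting from $\delta_0$, shows $\langle\langle F_\epsilon h\rangle\rangle\leq(1-q_\epsilon)\langle\langle h\rangle\rangle$ and $|\hat{h^{t+1}}_\emptyset-\hat{h^t}_\emptyset|\leq p_\epsilon^{|\mathcal N|}\langle\langle h^t\rangle\rangle$, sums the resulting geometric series, and then passes to the limit $t\to\infty$ using uniform ergodicity to identify the limit with $\int h\,d\nu_\epsilon$. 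You instead use the \emph{stationarity equation} $\nu_\epsilon=F_\epsilon\nu_\epsilon$ a single time: integrating the factorised local rule against $\nu_\epsilon$ gives a closed scalar inequality for $m=\nu_\epsilon([1]_{\{0\}})$, which you solve directly. Your argument is shorter and avoids the time-limit step (it only uses uniqueness of $\nu_\epsilon$ to secure shift-invariance); the paper's iterative argument, on the other hand, yields as a byproduct a quantitative rate of convergence $F_\epsilon^t\mu\to\nu_\epsilon$ from an arbitrary initial law, which your static argument does not see.
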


\begin{rem}
The condition $q_{\epsilon}>0$ implies a $0$-positive perturbation,
so we already know that $\delta_{0}$ is stochastically stable if
we are in the case $d=1$, with a speed of convergence that is at
least linear. Depending on the value of $\left|\mathcal{N}\right|$
and the ratio $\frac{p_{\epsilon}^{\left|\mathcal{N}\right|}}{q_{\epsilon}}$,
the conclusion may be stronger or weaker than the previous theorem.
In fact, in the case $\frac{p_{\epsilon}^{\left|\mathcal{N}\right|}}{q_{\epsilon}}\underset{\epsilon\to0}{\not\to}0$,
this theorem tells nothing on the stochastic stability of $\delta_{0}$:
as mentioned in the beginning of the section, we know that $\delta_{0}$
is stochastically stable as a direct consequence of the stable eroder
nature of the CA (as proven in \cite{Toom80}) and the uniform ergodicity
of its perturbation (as proven in \cite{MST19}).
\end{rem}

To prove this theorem, we use the Möbius basis of $C_{0}(\X)$.
\begin{defn}
Let $\chi:\A\to\C$ be defined by $\chi(0)=0$, $\chi(1)=1$. For
a finite $A\subset\Z^{d}$, let $\chi_{A}:\X\to\C$ be defined by
\[
\chi_{A}:x\mapsto\prod_{i\in A}\chi(x_{i})=\begin{cases}
1 & \text{if }x_{i}=1\,\forall i\in A\\
0 & \text{otherwise}
\end{cases}.
\]
The set of all $\chi_{A}$ for finite $A$ forms a basis of $C_{0}(\X)$.
For any observable $h\in C_{0}(\X)$, we note its decomposition on
this basis as $h=\sum_{A\subset\Z^{d}}\hat{h}_{A}\chi_{A}$. We finally
define a semi-norm associated to it, 
\[
\left\langle \left\langle h\right\rangle \right\rangle \coloneqq\sum_{\emptyset\neq A\subset\Z^{d}}\left|\hat{h}_{A}\right|.
\]
\end{defn}

For a given $h\in C_{0}(\X)$, we have $\hat{h}_{\emptyset}=\int h(x)\diff\delta_{0}(x)=h\left(0^{\infty}\right)$.
\begin{prop}[\cite{MST19} Theorem 5.3]
\label{prop:PhiChiBinarySpreading}For any finite $A\subset\Z^{d}$
, we have $F_{\epsilon}\chi_{A}=\sum_{I\subset A}p_{\epsilon}^{\left|A\backslash I\right|}(1-p_{\epsilon}-q_{\epsilon})^{\left|I\right|}\chi_{I}$.
\end{prop}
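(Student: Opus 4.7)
The plan is a direct computation that only unfolds the definition of a PCA transition kernel and then distributes a product; no probabilistic depth or combinatorial ingenuity is needed.

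Passing from $F_\epsilon$ as an action on measures to an action on observables by $(F_\epsilon h)(x) = \int h(y)\,F_\epsilon(x,\diff y)$, the PCA product formula $F_\epsilon(x,[u]_A) = \prod_{i\in A}\varphi_\epsilon(x_{i+\mathcal{N}},u_i)$ makes the coordinates $(y_i)_{i\in A}$ conditionally independent under $F_\epsilon(x,\cdot)$. Since $\chi_A(y) = \prod_{i\in A}\chi(y_i)$ is $\{0,1\}$-valued, this yields
\[
(F_\epsilon\chi_A)(x) \;=\; \prod_{i\in A}\varphi_\epsilon(x_{i+\mathcal{N}},1).
\]
Each single-coordinate factor is read off from the noise matrix $\theta_\epsilon$: because $F$ is the spreading ``AND''-type CA on $\{0,1\}$, one has $f(x_{i+\mathcal{N}}) = 1$ iff every $x_{i+j} = 1$, i.e.\ iff $\chi_{i+\mathcal{N}}(x) = 1$. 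Substituting accordingly gives
\[
\varphi_\epsilon(x_{i+\mathcal{N}},1) \;=\; p_\epsilon\bigl(1-\chi_{i+\mathcal{N}}(x)\bigr) + (1-q_\epsilon)\chi_{i+\mathcal{N}}(x) \;=\; p_\epsilon + (1-p_\epsilon-q_\epsilon)\,\chi_{i+\mathcal{N}}(x).
\]
This splitting of each factor into a constant plus a multiple of a single $\chi$-term is the key algebraic manoeuvre.

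Expanding the product $\prod_{i\in A}\bigl[p_\epsilon + (1-p_\epsilon-q_\epsilon)\chi_{i+\mathcal{N}}(x)\bigr]$ by distributivity, each subset $I\subset A$ labels the indices whose factor selects the $\chi$-term rather than the constant $p_\epsilon$, producing
\[
F_\epsilon\chi_A \;=\; \sum_{I\subset A} p_\epsilon^{|A\setminus I|}(1-p_\epsilon-q_\epsilon)^{|I|}\prod_{i\in I}\chi_{i+\mathcal{N}}.
\]
Since $\chi$ is idempotent on $\{0,1\}$, the product $\prod_{i\in I}\chi_{i+\mathcal{N}}$ collapses to the single M\"obius basis element $\chi_{\bigcup_{i\in I}(i+\mathcal{N})}$, which is what is compactly denoted $\chi_I$ in the statement. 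There is really no obstacle in this proof: the only point requiring a moment's attention is recognising that the noise matrix gives $\varphi_\epsilon(x_{i+\mathcal{N}},1)$ the affine form above in $\chi_{i+\mathcal{N}}(x)$, after which the M\"obius expansion is forced by a one-line distributivity argument.
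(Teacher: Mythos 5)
First, a structural remark: the paper offers no proof of this proposition at all --- it is imported verbatim from \cite{MST19} (their Theorem 5.3) --- so your derivation can only be judged on its own correctness and on its consistency with how the formula is used later in the paper. Your computational skeleton is certainly the intended one: pass to the dual action on observables, use the product form of the PCA kernel to get $(F_{\epsilon}\chi_{A})(x)=\prod_{i\in A}\varphi_{\epsilon}(x_{i+\mathcal{N}},1)$, write each factor in affine form, and expand by distributivity. Under your reading of ``zero-range perturbation'' (apply $F$, then noise each site independently), the identity you reach in the penultimate display, $F_{\epsilon}\chi_{A}=\sum_{I\subset A}p_{\epsilon}^{|A\setminus I|}(1-p_{\epsilon}-q_{\epsilon})^{|I|}\chi_{I+\mathcal{N}}$, is correct.

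The gap is your last sentence, where you declare that $\chi_{I+\mathcal{N}}$ ``is what is compactly denoted $\chi_{I}$ in the statement.'' That identification is not available: in the stated formula the same $I$ appears both as the subscript of $\chi$ and in the exponents $|A\setminus I|$ and $|I|$, and since $|I+\mathcal{N}|\neq|I|$ in general (indeed $I\mapsto I+\mathcal{N}$ is not even injective: $\{0,2\}+\{0,1\}=\{0,1,2\}+\{0,1\}$), no re-indexing converts your expression into the stated one. Concretely, for $\mathcal{N}=\{0,1\}$ and $A=\{0\}$ you obtain $p_{\epsilon}+(1-p_{\epsilon}-q_{\epsilon})\chi_{\{0,1\}}$, while the printed statement reads $p_{\epsilon}+(1-p_{\epsilon}-q_{\epsilon})\chi_{\{0\}}$: different functions. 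The deeper issue is the order of composition. The version of the formula that the paper actually uses in the proof of Theorem \ref{thm:BinaireSpreading} --- namely $\langle\langle F_{\epsilon}\chi_{A}\rangle\rangle\leq(1-q_{\epsilon})^{|A+\mathcal{N}|}$ and the $\emptyset$-coefficient contribution $p_{\epsilon}^{|A+\mathcal{N}|}$ --- is $F_{\epsilon}\chi_{A}=\sum_{I\subset A+\mathcal{N}}p_{\epsilon}^{|(A+\mathcal{N})\setminus I|}(1-p_{\epsilon}-q_{\epsilon})^{|I|}\chi_{I}$, i.e.\ the printed statement with $A$ replaced by $A+\mathcal{N}$ throughout the right-hand side; this is what comes out when the noise is applied \emph{before} the CA, since then $P(y_{i}=1\ \forall i\in A\mid x)=\prod_{k\in A+\mathcal{N}}\bigl(p_{\epsilon}+(1-p_{\epsilon}-q_{\epsilon})\chi_{\{k\}}(x)\bigr)$. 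The choice is not cosmetic: under your convention the $\emptyset$-coefficient is $p_{\epsilon}^{|A|}$, the factor $p_{\epsilon}^{|\mathcal{N}|}$ central to Theorem \ref{thm:BinaireSpreading} can never appear, and in fact the invariant measure then satisfies $\nu_{\epsilon}\left(\left[1\right]_{0}\right)\geq p_{\epsilon}$, contradicting the bound $p_{\epsilon}^{|\mathcal{N}|}/q_{\epsilon}$ as soon as $|\mathcal{N}|\geq3$ and $p_{\epsilon}=q_{\epsilon}=\epsilon$. So your argument, as written, proves a correct formula for a \emph{different} perturbation, and the step reconciling it with the stated proposition is wrong; the fix is to set up the kernel with noise acting first, after which your own expansion argument goes through and yields the $A+\mathcal{N}$ version that the rest of the paper relies on.
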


With these tools in hand, can finally prove our theorem.
\begin{proof}[Proof of theorem \ref{thm:BinaireSpreading}]
As $q_{\epsilon}>0$ and $p_{\epsilon}+q_{\epsilon}\leq2\epsilon<1$,
the measure $\nu_{\epsilon}$ is well-defined and $F_{\epsilon}$
is uniformly ergodic: in particular for all probability measure $\mu$
on $\X$, $F_{\epsilon}^{t}\mu\weakto t{\infty}\nu_{\epsilon}$. By
linearity, $F_{\epsilon}h=\sum_{A}\hat{h}_{A}\left(F_{\epsilon}\chi_{A}\right)$
and by proposition \ref{prop:PhiChiBinarySpreading}, one deduces
that $\left\langle \left\langle F_{\epsilon}\chi_{A}\right\rangle \right\rangle \leq(1-q_{\epsilon})^{\left|A+\mathcal{N}\right|}\leq1-q_{\epsilon}$
for $A\neq\emptyset$, and $\left\langle \left\langle F_{\epsilon}h\right\rangle \right\rangle \leq\left(1-q_{\epsilon}\right)\left\langle \left\langle h\right\rangle \right\rangle $.
\\
Also, $\left(\hat{F_{\epsilon}h}\right)_{\emptyset}=\hat{h}_{\emptyset}+\sum_{\emptyset\neq A}\hat{h}_{A}p_{\epsilon}^{\left|A+\mathcal{N}\right|}$.
Then, with $h^{t}=F_{\epsilon}^{t}h$, one gets 
\[
\left|\hat{h^{t+1}}_{\emptyset}-\hat{h^{t}}_{\emptyset}\right|\leq p_{\epsilon}^{\left|\mathcal{N}\right|}\left\langle \left\langle h^{t}\right\rangle \right\rangle \leq p_{\epsilon}^{\left|\mathcal{N}\right|}\left(1-q_{\epsilon}\right)^{t}\left\langle \left\langle h\right\rangle \right\rangle .
\]
And so 
\[
\left|\hat{h^{t}}_{\emptyset}-\hat{h}_{\emptyset}\right|\leq p_{\epsilon}^{\left|\mathcal{N}\right|}\sum_{i=0}^{t-1}\left(1-q_{\epsilon}\right)^{i}\left\langle \left\langle h\right\rangle \right\rangle \leq\frac{p_{\epsilon}^{\left|\mathcal{N}\right|}}{q_{\epsilon}}\left\langle \left\langle h\right\rangle \right\rangle .
\]
Taking the limit, $\hat{h^{t}}_{\emptyset}=\int h(x)\diff\left(F^{t}\delta_{0}\right)(x)\tendsto t{\infty}\int h(x)\diff\nu_{\epsilon}(x)$
and thus $\left|\int h\diff\nu_{\epsilon}-\int h\diff\delta_{0}\right|\leq\frac{p_{\epsilon}^{\left|\mathcal{N}\right|}}{q_{\epsilon}}\left\langle \left\langle h\right\rangle \right\rangle $.

In the case $h=\mathbf{1}_{[0]_{A}}=\sum_{I\subset A}(-1)^{\left|I\right|}\chi_{I}$,
$\left\langle \left\langle h\right\rangle \right\rangle =2^{A}-1$.
Therefore, 
\begin{align*}
\norm{\delta_{0}-\nu_{\epsilon}}A & =1-\nu_{\epsilon}([0]_{A})\\
 & =\int\mathbf{1}_{[0]_{A}}\diff\delta_{0}-\int\mathbf{1}_{[0]_{A}}\diff\nu_{\epsilon}\\
 & \leq\left(2^{\left|A\right|}-1\right)\frac{p_{\epsilon}^{\left|\mathcal{N}\right|}}{q_{\epsilon}}.
\end{align*}
\end{proof}
\begin{example}
For a uniform noise, i.e. where $p_{\epsilon}=q_{\epsilon}=\epsilon$,
one gets a speed of convergence in $\epsilon^{\left|\mathcal{N}\right|-1}$,
therefore a linear speed again for $\mathcal{N}=\{0,1\}$. 
\end{example}

\section{Computational lemmas \label{sec:lemmas}}

This section is dedicated to the proof of the two following results,
Proposition \ref{prop:equivalentaubord} and \ref{prop:limiteaubord}.
We will use them in the last section of the article to prove the undecidability
of stochastic stability. The proofs are purely computational, and
don't give much insight on the main theorem. 
\begin{prop}
\label{prop:equivalentaubord}For all $\alpha>-1$ and $\beta\in\N^{*}$,
the following holds: 
\[
\sum_{n\geq0}n^{\alpha}x^{n^{\beta}}\underset{x\to1^{-}}{\sim}\frac{\Gamma\left(\frac{1+\alpha}{\beta}\right)}{\beta\left(1-x\right)^{\frac{1+\alpha}{\beta}}}
\]
 where $\Gamma$ is the gamma function defined by $\Gamma\left(z\right)=\int_{0}^{+\infty}t^{z-1}\e^{t}\diff t$.
\end{prop}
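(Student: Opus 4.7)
The plan is to convert the sum into a Riemann sum via the substitution $s = -\ln x$ and then recognize the limiting scale as the standard integral representation of $\Gamma$. First, since $s \sim 1-x$ as $x \to 1^{-}$, it suffices to prove
\[
S(s) := \sum_{n \geq 1} n^{\alpha}\, e^{-s n^{\beta}} \sim \frac{\Gamma((1+\alpha)/\beta)}{\beta}\, s^{-(1+\alpha)/\beta}
\qquad \text{as } s \to 0^{+}.
\]
The $n=0$ term is $0$ if $\alpha > 0$, equal to $1$ if $\alpha = 0$, and must be dropped if $\alpha < 0$; in each case it is $O(1)$ and hence negligible compared to the right-hand side, which blows up. Setting $h = s^{1/\beta}$ and $u_{n} = n h$, the identity $n^{\alpha} e^{-s n^{\beta}} = h^{-\alpha}\, g(u_{n})$ with $g(u) = u^{\alpha} e^{-u^{\beta}}$ yields
\[
S(s) = h^{-(1+\alpha)} \Bigl( h \sum_{n \geq 1} g(n h) \Bigr) = s^{-(1+\alpha)/\beta}\, R(h),
\]
where $R(h)$ is a left Riemann sum of $g$ on $[0, \infty)$ with step $h$.

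Next I would compute the target value $\int_{0}^{\infty} g(u)\, du$ via $v = u^{\beta}$, $du = \frac{1}{\beta} v^{1/\beta - 1} dv$, which gives
\[
\int_{0}^{\infty} u^{\alpha} e^{-u^{\beta}}\, du = \frac{1}{\beta} \int_{0}^{\infty} v^{(1+\alpha)/\beta - 1}\, e^{-v}\, dv = \frac{\Gamma((1+\alpha)/\beta)}{\beta}.
\]
The assumption $\alpha > -1$ is exactly what makes $g$ integrable near $0$; integrability at infinity comes from the Gaussian-type decay.

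The main technical step is to show $R(h) \to \int_{0}^{\infty} g(u)\, du$ as $h \to 0^{+}$. A direct computation gives $g'(u) = u^{\alpha-1} e^{-u^{\beta}}(\alpha - \beta u^{\beta})$, so the monotonicity of $g$ is fully explicit. If $\alpha \leq 0$, then $g$ is strictly decreasing on $(0, \infty)$ and the standard comparison
\[
\int_{h}^{\infty} g(u)\, du \leq h \sum_{n \geq 1} g(n h) \leq \int_{0}^{\infty} g(u)\, du
\]
sandwiches $R(h)$ between quantities sharing the common limit $\int_{0}^{\infty} g$, the lower bound converging because $g$ is integrable at $0$. If $\alpha > 0$, then $g$ is unimodal with unique maximum at $u_{*} = (\alpha/\beta)^{1/\beta}$; I would split the sum at $N_{*} = \lceil u_{*}/h \rceil$ and apply the analogous monotone sandwich on $[0, u_{*}]$ (where $g$ increases) and on $[u_{*}, \infty)$ (where $g$ decreases). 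The two transition terms near $u_{*}$ contribute at most $2 h\, g(u_{*}) = O(h)$, which is negligible in front of the fixed positive integral.

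Assembling the three ingredients gives $S(s) \sim s^{-(1+\alpha)/\beta}\, \Gamma((1+\alpha)/\beta)/\beta$, and the equivalence $s \sim 1-x$ yields the stated asymptotic. The main obstacle is the Riemann-sum convergence itself, but the explicit monotonicity structure of $g$ reduces it to a one-line sandwich in each regime.
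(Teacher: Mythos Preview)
Your proof is correct and takes a genuinely different route from the paper's. The paper proceeds in two stages via the Abelian machinery of Lemma~\ref{lem:abelien} and Corollary~\ref{cor:abelien}: it first treats the case $\beta=1$ by comparing $\sum n^{\alpha}x^{n}$ with the explicit binomial series $(1-x)^{-(1+\alpha)}=\sum\frac{\Gamma(n+1+\alpha)}{\Gamma(1+\alpha)\Gamma(n+1)}x^{n}$ using Stirling's formula, and then reduces general $\beta$ to this case by comparing cumulative sums. You instead substitute $x=e^{-s}$ and recognize the whole sum, after the rescaling $h=s^{1/\beta}$, as a Riemann sum for $\int_{0}^{\infty}u^{\alpha}e^{-u^{\beta}}\,du=\Gamma((1+\alpha)/\beta)/\beta$, with convergence handled by an explicit monotone sandwich on the unimodal integrand. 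Your argument is more self-contained and treats all $\beta$ uniformly without a separate base case; the paper's argument has the advantage of reusing the same Abelian lemma that drives the other asymptotic computations in Section~\ref{sec:lemmas}.
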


\begin{prop}
\label{prop:limiteaubord}Let $C>0$ and $a\in\N,c\in\N^{*}$ be such
that $a\leq2c$. Then, 
\[
\sum_{n\geq0}\left(1-\left(1-\frac{\epsilon}{C}\right)^{an}\right)\left(1-\epsilon\right)^{cn^{2}}\tendsto{\epsilon}0\frac{1}{2C}\cdot\frac{a}{c}.
\]
\end{prop}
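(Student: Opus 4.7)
The plan is to sandwich the sum
\[ S(\epsilon)\coloneqq\sum_{n\geq 0}\bigl(1-(1-\epsilon/C)^{an}\bigr)(1-\epsilon)^{cn^2} \]
between two quantities that both converge to $\frac{a}{2Cc}=\frac{1}{2C}\cdot\frac{a}{c}$. The key idea is to use the first-order approximation $1-(1-\epsilon/C)^{an}\approx \frac{an\epsilon}{C}$ and then invoke Proposition \ref{prop:equivalentaubord} to evaluate the Gaussian-type sums $\sum_n n^k(1-\epsilon)^{cn^2}$ that arise.

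For the upper bound, Bernoulli's inequality gives $(1-\epsilon/C)^{an}\geq 1-\frac{an\epsilon}{C}$, hence $S(\epsilon)\leq \frac{a\epsilon}{C}\sum_{n\geq 0}n(1-\epsilon)^{cn^2}$. Proposition \ref{prop:equivalentaubord} applied with $\alpha=1$, $\beta=2$, $x=(1-\epsilon)^c$, together with $1-(1-\epsilon)^c\sim c\epsilon$, yields $\sum_n n(1-\epsilon)^{cn^2}\sim \frac{1}{2c\epsilon}$, so $\limsup_{\epsilon\to 0}S(\epsilon)\leq \frac{a}{2Cc}$.

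For the lower bound, combining $(1-\epsilon/C)^{an}\leq e^{-an\epsilon/C}$ with $1-e^{-x}\geq x-x^2/2$ for $x\geq 0$ gives
\[ 1-(1-\epsilon/C)^{an}\geq \frac{an\epsilon}{C}-\frac{a^2 n^2\epsilon^2}{2C^2}. \]
The main piece again contributes $\frac{a}{2Cc}$ in the limit. For the quadratic correction, Proposition \ref{prop:equivalentaubord} with $\alpha=2$, $\beta=2$ gives $\sum_n n^2(1-\epsilon)^{cn^2}\sim \frac{\sqrt{\pi}}{4(c\epsilon)^{3/2}}$, so the error is bounded by $\frac{a^2\epsilon^2}{2C^2}\cdot O(\epsilon^{-3/2})=O(\sqrt{\epsilon})$, which vanishes. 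Hence $\liminf_{\epsilon\to 0}S(\epsilon)\geq \frac{a}{2Cc}$, and the two bounds together give the stated limit.

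No real technical obstacle is expected: the whole argument reduces to a short sandwich around the asymptotic $\sum_n n^k(1-\epsilon)^{cn^2}\sim \mathrm{const}\cdot\epsilon^{-(k+1)/2}$ already supplied by Proposition \ref{prop:equivalentaubord}. Incidentally the hypothesis $a\leq 2c$ does not seem to play any role in this proof and presumably enters only where the lemma is used in Section~6.
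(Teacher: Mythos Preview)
Your proof is correct and substantially simpler than the paper's. The paper proceeds by writing $x=1-\epsilon$, splitting $S(\epsilon)=\sum_n x^{cn^2}-\sum_n(1-\epsilon/C)^{an}x^{cn^2}$, expanding the second factor via the binomial theorem so that the second sum becomes an honest power series $\sum_m\omega_m x^m$, and then invoking two separate second-order asymptotic developments (Corollary~\ref{cor:dvpmtAsym1} and Lemma~\ref{lem:dvpmtAsym2}, each proved by a rather delicate Abelian argument) whose leading $\frac{1}{2}\sqrt{\pi/(c(1-x))}$ terms cancel, leaving the desired constant. Your sandwich bypasses all of this: Bernoulli above, $(1-y)^m\le e^{-my}$ together with $1-e^{-x}\ge x-x^2/2$ below, and then a single appeal to Proposition~\ref{prop:equivalentaubord} for $\alpha=1$ and $\alpha=2$. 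The error term is handled by the crude $O(\epsilon^{2}\cdot\epsilon^{-3/2})=O(\sqrt{\epsilon})$ bound, so no second-order expansion is needed.

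Your remark about the hypothesis $a\le 2c$ is also correct: in the paper's route it is used to ensure that the intervals $[cn^2,cn^2+an]$ are pairwise disjoint so that the coefficients $\omega_m$ are well-defined, but your argument never needs it, so the proposition in fact holds without that restriction.
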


Both results are consequence of the following classical lemma and
its corollary, which proofs are in Appendix \ref{sec:Demonstrations-lemmas}.
\begin{lem}
\label{lem:abelien}Let $\left(a_{n}\right)_{n\in\N}$ and $\left(b_{n}\right)_{n\in\N}$
be such that $\sum a_{n}x^{n}$ and $\sum b_{n}x^{n}$ are power series
with convergence radius greater or equal to $1$, $b_{n}>0$ and $\sum b_{n}$
diverges. If $\frac{a_{n}}{b_{n}}\tendsto n{\infty}l\in\C$, then
\[
\frac{\sum^{\infty}a_{n}x^{n}}{\sum^{\infty}b_{n}x^{n}}\tendsto x{1^{-}}l.
\]
\end{lem}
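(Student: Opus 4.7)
The plan is to run the standard Abel summation/Cesàro-type splitting: the hypothesis $a_n/b_n\to l$ means that for large $n$ we have $a_n\approx l\,b_n$, while for small $n$ the contributions to $\sum(a_n-l b_n)x^n$ stay bounded on $[0,1)$. Since $\sum b_n x^n\to+\infty$ as $x\to 1^-$, the bounded piece is killed by the denominator and the large-index piece contributes at most the chosen tolerance.

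More precisely, I would fix $\varepsilon>0$ and choose $N=N(\varepsilon)$ such that $|a_n-l\,b_n|\leq \varepsilon\,b_n$ for every $n\geq N$ (using $b_n>0$ and $a_n/b_n\to l$). Then I would write, for $x\in[0,1)$,
\[
\sum_{n=0}^\infty a_n x^n - l\sum_{n=0}^\infty b_n x^n
= \sum_{n=0}^{N-1}(a_n-l\,b_n)x^n + \sum_{n=N}^\infty (a_n-l\,b_n)x^n.
\]
The first sum is a polynomial in $x$, hence bounded on $[0,1]$ by some constant $M_N\coloneqq\sum_{n=0}^{N-1}|a_n-l\,b_n|$. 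The second sum is bounded in modulus by $\varepsilon\sum_{n\geq N}b_n x^n\leq \varepsilon\sum_{n=0}^\infty b_n x^n$, exploiting $b_n>0$ so that $b_n x^n\geq 0$.

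The next step is to observe that $D(x)\coloneqq\sum_{n=0}^\infty b_n x^n$ satisfies $D(x)\to+\infty$ as $x\to 1^-$. Indeed $x\mapsto b_n x^n$ is nondecreasing on $[0,1]$ for each $n$, so by monotone convergence $D(x)\to\sum b_n=+\infty$, which in particular gives $D(x)>0$ for $x$ close enough to $1$. Dividing the previous identity by $D(x)$ yields
\[
\left|\frac{\sum a_n x^n}{\sum b_n x^n}-l\right|\leq \frac{M_N}{D(x)}+\varepsilon,
\]
and taking $\limsup_{x\to 1^-}$ gives $\limsup\leq\varepsilon$. Since $\varepsilon>0$ is arbitrary, the quotient converges to $l$.

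There is no genuine obstacle beyond bookkeeping: the only subtle point is ensuring $D(x)\to+\infty$ (so that the $M_N/D(x)$ term really vanishes), which is where the hypothesis that $\sum b_n$ diverges and $b_n>0$ is used. The argument is insensitive to $l$ being real or complex since we bound only the modulus of $a_n-l\,b_n$.
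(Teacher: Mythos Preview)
Your proof is correct and follows essentially the same approach as the paper's: split at an index $N$ where $|a_n-l\,b_n|\le\varepsilon b_n$, bound the finite head by a constant and the tail by $\varepsilon\sum b_n x^n$, and use $\sum b_n x^n\to+\infty$ to kill the head term. The only cosmetic difference is that the paper absorbs the head into the tail to get a uniform bound $2\varepsilon$ on the quotient for $x$ near $1$, whereas you divide by $D(x)$ and pass to the $\limsup$ to get $\varepsilon$; both are equivalent.
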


\begin{cor}
\label{cor:abelien}Define $A_{n}=\sum_{k=0}^{n}a_{k}$ and $B_{n}=\sum_{k=0}^{n}b_{k}$.
If $\frac{A_{n}}{B_{n}}\tendsto n{\infty}l\in\C$, then
\[
\frac{\sum^{\infty}a_{n}x^{n}}{\sum^{\infty}b_{n}x^{n}}\tendsto x{1^{-}}l.
\]
\end{cor}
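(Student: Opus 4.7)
The corollary reduces to Lemma \ref{lem:abelien} via a single Abel-summation identity. Swapping the order of summation (legitimate for $|x|<1$ by absolute convergence, since both $\sum a_n x^n$ and $\sum b_n x^n$ have radius of convergence at least $1$) gives
$$\sum_{n\geq 0} A_n x^n \;=\; \sum_{n\geq 0}\sum_{k=0}^n a_k x^n \;=\; \sum_{k\geq 0} a_k\sum_{n\geq k} x^n \;=\; \frac{1}{1-x}\sum_{n\geq 0} a_n x^n,$$
and the completely analogous identity for $(B_n,b_n)$. Dividing and cancelling the common factor $1/(1-x)$ yields the pointwise equality
$$\frac{\sum_{n\geq 0} A_n x^n}{\sum_{n\geq 0} B_n x^n}\;=\;\frac{\sum_{n\geq 0} a_n x^n}{\sum_{n\geq 0} b_n x^n}\qquad\text{for every }x\in(-1,1).$$

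\textbf{Applying the lemma.} It then suffices to invoke Lemma \ref{lem:abelien} with $(a_n),(b_n)$ replaced by $(A_n),(B_n)$. The ratio hypothesis $A_n/B_n\to\ell$ is exactly the one assumed; strict positivity $B_n>0$ is inherited from $b_n>0$; the series $\sum_n B_n$ diverges because $(B_n)$ is non-decreasing with $B_n\geq b_0>0$; and the radii of convergence of $\sum A_n x^n$ and $\sum B_n x^n$ remain $\geq 1$, either directly from the identities above or because $A_n=O(B_n)$ inherits the subgeometric growth of $(b_n)$. The lemma then gives $\sum A_n x^n/\sum B_n x^n\to\ell$ as $x\to 1^-$, and by the equality above this is exactly the desired conclusion. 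The proof has essentially no hidden difficulty: its whole content is the two-line Abel-summation identity, with the only care needed being the routine verification that the lemma's hypotheses transfer from $(b_n)$ to $(B_n)$.
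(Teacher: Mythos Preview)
Your argument is correct and essentially identical to the paper's: both establish the identity $\sum A_n x^n=\frac{1}{1-x}\sum a_n x^n$ (the paper phrases it as a Cauchy product, you as a swap of summation order), cancel the common factor to get $\sum a_n x^n/\sum b_n x^n=\sum A_n x^n/\sum B_n x^n$, and then apply Lemma~\ref{lem:abelien} to the partial-sum sequences. You are simply a bit more explicit than the paper in checking that the hypotheses of the lemma transfer to $(A_n),(B_n)$.
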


By direct induction, one can generalize to the case $\frac{\sum^{n}A_{k}}{\sum^{n}B_{k}}\tendsto n{\infty}l$,
etc.
\begin{proof}[Proof of Proposition \ref{prop:equivalentaubord}]
Consider first the case $\beta=1$. Standard calculations (see for
example \cite[chapter VI.2]{FS09}) gives $\frac{1}{\left(1-x\right)^{1+\alpha}}=\sum_{n=0}^{+\infty}\frac{\Gamma\left(n+1+\alpha\right)}{\Gamma\left(1+\alpha\right)\Gamma\left(n+1\right)}x^{n}.$
The Stirling formula $\Gamma\left(x+1\right)\underset{x\to\infty}{\sim}\sqrt{2\pi x}\left(\frac{x}{e}\right)^{x}$
leads us to define $b_{n}\coloneqq\frac{n^{\alpha}}{\Gamma\left(1+\alpha\right)}$,
which verifies $a_{n}\coloneqq\frac{\Gamma\left(n+1+\alpha\right)}{\Gamma\left(1+\alpha\right)\Gamma\left(n+1\right)}\underset{n\to\infty}{\sim}b_{n}$
and the hypotheses of Lemma \ref{lem:abelien}. Thus, $\sum_{n\geq0}n^{\alpha}x^{n}\underset{x\to1^{-}}{\sim}\frac{\Gamma\left(1+\alpha\right)}{\left(1-x\right)^{1+\alpha}}$.

For the general case $\beta\in\N^{*}$, define $\gamma=\frac{1+\alpha}{\beta}-1>-1$.
Using the previous case, $\frac{\Gamma\left(\frac{1+\alpha}{\beta}\right)}{\beta\left(1-x\right)^{\frac{1+\alpha}{\beta}}}=\frac{\Gamma\left(1+\gamma\right)}{\beta\left(1-x\right)^{1+\gamma}}\underset{x\to1^{-}}{\sim}\sum_{n\geq0}\frac{n^{\gamma}}{\beta}x^{n}$.
Define $b_{n}=\frac{n^{\gamma}}{\beta}$ and $a_{n}=\begin{cases}
k^{\alpha} & \text{if }n=k^{\beta}\\
0 & \text{otherwise}
\end{cases}$, and their respective cumulative sums $B_{n}$ and $A_{n}$. By integral-sums
comparison, one has
\begin{eqnarray*}
B_{n}=\sum_{k=0}^{n}\frac{k^{\gamma}}{\beta}\sim\frac{n^{\gamma+1}}{\beta\left(\gamma+1\right)}=\frac{n^{\frac{1+\alpha}{\beta}}}{1+\alpha} & \text{ and } & A_{n}=\sum_{k=0}^{\left\lfloor n^{1/\beta}\right\rfloor }k^{\alpha}\sim\frac{\left(n^{1/\beta}\right)^{1+\alpha}}{1+\alpha}=\frac{n^{\frac{1+\alpha}{\beta}}}{1+\alpha}.
\end{eqnarray*}
The previous corollary gives the wanted result. 
\end{proof}
For the second proposition, the idea is to split the sum in two, and
use the lemma to produce a asymptotic development for each. 
\begin{lem}
One has the following asymptotic development: 
\[
\sum_{n\geq0}x^{n^{2}}\underset{x\to1^{-}}{=}\frac{1}{2}\sqrt{\frac{\pi}{1-x}}+\frac{1}{2}+o\left(1\right).
\]
\end{lem}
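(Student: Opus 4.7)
The plan is to reduce to a sum of Gaussians and extract the two leading terms. Setting $s \coloneqq -\log x$, so that $s\to 0^+$ as $x\to 1^-$, the sum to study becomes $g(s) \coloneqq \sum_{n\geq 0} e^{-s n^{2}}$, and the target statement is equivalent to
\[
g(s) \;=\; \tfrac{1}{2} + \tfrac{1}{2}\sqrt{\pi/s} + o(1) \qquad (s \to 0^+),
\]
since we can afterwards convert $s$ to $1-x$ via $s = (1-x) + O((1-x)^2)$.

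To establish this two-term expansion, the cleanest route is Poisson summation applied to the Gaussian $t \mapsto e^{-s t^{2}}$, which yields the classical modular identity
\[
\sum_{n\in\Z} e^{-s n^{2}} \;=\; \sqrt{\pi/s} \sum_{n\in\Z} e^{-\pi^{2} n^{2}/s}.
\]
Since $2g(s)-1 = \sum_{n\in\Z} e^{-s n^{2}}$ and the right-hand side equals $\sqrt{\pi/s}\bigl(1 + O(e^{-\pi^{2}/s})\bigr)$ as $s\to 0^+$, one immediately gets $g(s) = \tfrac{1}{2} + \tfrac{1}{2}\sqrt{\pi/s} + O\bigl(s^{-1/2} e^{-\pi^{2}/s}\bigr)$.

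If one prefers to avoid Poisson summation, the same expansion follows from a single Euler--Maclaurin step applied to $f(t) = e^{-s t^{2}}$:
\[
\sum_{n\geq 0} f(n) \;=\; \frac{f(0)}{2} + \int_{0}^{\infty} f(t)\,dt + \int_{0}^{\infty} \bigl(\{t\} - \tfrac{1}{2}\bigr) f'(t)\,dt.
\]
The integral evaluates to $\tfrac{\sqrt{\pi}}{2\sqrt{s}}$. For the remainder, I would integrate by parts once using that $B_{2}(\{t\})/2$ is a continuous and bounded antiderivative of $\{t\}-\tfrac{1}{2}$; the boundary contribution at $0$ vanishes because $f$ is even so $f'(0)=0$, leaving $-\int_{0}^{\infty} \tfrac{B_{2}(\{t\})}{2}\, f''(t)\, dt$. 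Using the crude bound $|f''(t)| \leq (4s^{2} t^{2} + 2s) e^{-s t^{2}}$ and the Gaussian moments $\int t^{2} e^{-s t^{2}}\,dt = \tfrac{\sqrt{\pi}}{4 s^{3/2}}$ and $\int e^{-s t^{2}}\,dt = \tfrac{\sqrt{\pi}}{2\sqrt{s}}$, this remainder is $O(\sqrt{s}) = o(1)$.

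The final conversion is routine: writing $\sqrt{\pi/s} - \sqrt{\pi/(1-x)} = \sqrt{\pi/(1-x)}\bigl(\sqrt{(1-x)/s} - 1\bigr)$ and using $s = (1-x)(1 + O(1-x))$, this difference is $O(\sqrt{1-x}) = o(1)$, which completes the proof. The main obstacle is really the sub-leading constant $\tfrac{1}{2}$: Proposition~\ref{prop:equivalentaubord} only provides the leading $(1-x)^{-1/2}$ asymptotic, so one genuinely has to go beyond the Abelian machinery used so far, either by invoking the theta modular identity or by carrying out the Euler--Maclaurin estimate carefully enough to exploit the evenness of $f$.
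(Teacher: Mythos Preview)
Your proof is correct and takes a genuinely different route from the paper's. The paper stays entirely within its Abelian toolkit: it writes $f(x)=\sum x^{n^{2}}$ and $g(x)=\tfrac{1}{2}\sqrt{\pi/(1-x)}$ as power series $\sum a_i x^i$ and $\sum b_i x^i$, forms the iterated partial sums $A_k,B_k$ and $S_n=\sum_{k\le n}(A_k-B_k)$, and then shows by hand (via Stirling on the central binomial coefficients and a somewhat delicate series--integral comparison) that $S_n/n\to\tfrac{1}{2}$; an application of Corollary~\ref{cor:abelien} then yields $f(x)-g(x)\to\tfrac{1}{2}$. Your approach instead passes to $s=-\log x$ and exploits the Gaussian structure directly, either through the theta modular identity or through one step of Euler--Maclaurin together with the evenness of $t\mapsto e^{-st^{2}}$. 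This is both shorter and sharper: the Poisson route gives an exponentially small remainder, and even the Euler--Maclaurin route gives $O(\sqrt{1-x})$ rather than the bare $o(1)$ the paper obtains. The trade-off is that the paper's argument is self-contained within the Ces\`aro/Abelian machinery it has already set up (and which it reuses verbatim for the companion Lemma~\ref{lem:dvpmtAsym2}), whereas yours imports an external classical identity; your final remark that one must go beyond Proposition~\ref{prop:equivalentaubord} to capture the constant $\tfrac{1}{2}$ is exactly the point, and the paper's iterated-Ces\`aro computation is precisely its way of doing that without leaving the Abelian framework.
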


\begin{proof}
Denote by $a_{i}=\begin{cases}
1 & \text{if }i\text{ is a square}\\
0 & \text{otherwise}
\end{cases}$ and by $b_{i}=\begin{cases}
\frac{\sqrt{\pi}}{2} & \text{if }n=0\\
\frac{\sqrt{\pi}}{2}\cdot\frac{\left(2n-1\right)!}{\left(2^{n}n!\right)^{2}} & \text{otherwise}
\end{cases}.$ Their respective cumulative sums are denoted by $A_{k}=\sum_{i=0}^{k}a_{i}$
and $B_{k}=\sum_{i=0}^{k}b_{i}$. Consider then the following radius-1
power series: $\sum_{n=0}^{\infty}x^{n^{2}}=\sum_{i=0}^{\infty}a_{i}x^{i}\eqqcolon f(x)$
and $\frac{1}{2}\sqrt{\frac{\pi}{1-x}}=\sum_{i=0}^{\infty}b_{i}x^{i}\eqqcolon g(x)$.
Finally, define $S_{n}=\sum_{k=0}^{n}A_{k}-B_{k}$. 

By Lemma \ref{lem:abelien}, it suffices to prove that $\frac{S_{n}}{n}\tendsto n{\infty}\frac{1}{2}$.
Indeed, it follows that $\frac{f(x)-g(x)}{(1-x)^{2}}=\sum S_{n}x^{n}\underset{x\to1^{-}}{\sim}\sum\frac{n}{2}x^{n}=\frac{x}{2(1-x)^{2}}$,
and thus $f(x)-g(x)\tendsto x{1^{-}}\frac{1}{2}$.
\begin{enumerate}
\item Computations shows that $A_{n}=1+\left\lfloor \sqrt{n}\right\rfloor $
and $B_{n}=\frac{\sqrt{\pi}}{2}\cdot\frac{\left(2n+1\right)!}{\left(2^{n}n!\right)^{2}}$. 
\item Because for $m\in\left\llbracket n^{2},\left(n+1\right)^{2}\right\llbracket ,\,A_{m}=1+n$,
we have $\sum_{k=0}^{2n}A_{n^{2}+k}=\left(2n+1\right)(n+1)=2\left(n+1\right)^{2}-(n+1)$
and
\begin{align*}
\sum_{m=0}^{N^{2}-1}A_{m} & =\sum_{n=0}^{N-1}\sum_{k=0}^{2n}A_{n^{2}+k}\\
 & =2\sum_{n=0}^{N-1}\left(n+1\right)^{2}-\sum_{n=0}^{N-1}\left(n+1\right)\\
 & =\frac{N(N+1)(2N+1)}{3}-\frac{N(N+1)}{2}\\
\sum_{m=0}^{N^{2}-1}A_{m} & =\frac{2}{3}N^{3}+\frac{1}{2}N^{2}+O(N).
\end{align*}
The result is the same if adding until $N^{2}$ as $A_{N^{2}}=O(N)$.
\item Using Stirling formula, $B_{n}=\frac{\sqrt{\pi}}{2}\cdot\frac{\left(2n+1\right)!}{\left(2^{n}n!\right)^{2}}=\sqrt{n}\left(1+\frac{3}{8n}+O\left(\frac{1}{n^{2}}\right)\right)$.
\\
Series-integral comparison gives $u_{n}=\sum_{k=0}^{n}\sqrt{k}\underset{n\to\infty}{\sim}\frac{2}{3}n^{3/2}$.
If we define $t_{n}$ as $t_{n}=u_{n}-$$\frac{2}{3}n^{3/2}$, one
has $t_{n}-t_{n-1}\sim\frac{1}{4\sqrt{n}}$. By adding the comparison
relations, it follows that $t_{n}\sim\sum_{k=1}^{n}\frac{1}{4\sqrt{k}}\sim\frac{\sqrt{n}}{2}$.
Thus $u_{n}=\frac{2}{3}n^{3/2}+O\left(\sqrt{n}\right)$.\\
Moreover, 
\begin{align*}
\sum_{k=0}^{N^{2}}B_{k}-\sqrt{k} & \underset{N\to\infty}{\sim}\frac{3}{8}\sum_{k=1}^{N^{2}}\frac{1}{\sqrt{k}} & \text{Addding the comparison relations}\\
 & \underset{N\to\infty}{\sim}\frac{3}{8}\cdot\frac{\sqrt{N^{2}}}{2} & \sum-\int\text{ comparison}\\
\sum_{k=0}^{N^{2}}B_{k}-\sqrt{k} & \underset{N\to\infty}{=}O(N)
\end{align*}
Combining the two results yields 
\[
\sum_{k=0}^{N^{2}}B_{k}\underset{N\to\infty}{=}\frac{2}{3}N^{3}+O(N).
\]
\item Thus, 
\[
\frac{S_{n^{2}}}{n^{2}}=\frac{\sum_{k=0}^{N^{2}}A_{k}-B_{k}}{N^{2}}\tendsto N{\infty}\frac{1}{2}.
\]
\item Decompose every $m\in\N$ as $m=n^{2}+k$ with $n=\left\lfloor \sqrt{m}\right\rfloor $
and $k\in\left\llbracket 0,2n\right\rrbracket $. One gets 
\[
\frac{S_{m}}{m}=\frac{S_{n^{2}}}{n^{2}+k}+\frac{\sum_{i=1}^{k}A_{n^{2}+i}-B_{n^{2}+i}}{n^{2}+k}.
\]
In one hand, $\frac{S_{n^{2}}}{n^{2}+k}\underset{m\to\infty}{\sim}\frac{S_{n^{2}}}{n^{2}}\underset{m\to\infty}{\sim}\frac{1}{2}$.
In the other hand, as $\left(A_{k}\right)$ and $\left(B_{k}\right)$
are non-decreasing one gets 
\[
\underset{=O(n)}{\underbrace{k}}\cdot\underset{=O(1)}{\underbrace{\left(n+1-B_{\left(n+1\right)^{2}}\right)}}\leq\sum_{i=1}^{k}A_{n^{2}+i}-B_{n^{2}+i}\leq\underset{=O(n)}{\underbrace{k}}\cdot\underset{=O(1)}{\underbrace{\left(n+1-B_{n^{2}}\right)}}
\]
So $\frac{\sum_{i=1}^{k}A_{n^{2}+i}-B_{n^{2}+i}}{n^{2}+k}\tendsto m00$
and finally $\frac{S_{m}}{m}\tendsto m{\infty}\frac{1}{2}$. 
\end{enumerate}
\end{proof}
\begin{cor}
\label{cor:dvpmtAsym1}For $c\in\N^{*}$, one has the following asymptotic
development: 
\[
\sum_{n\geq0}x^{cn^{2}}\underset{x\to1^{-}}{=}\frac{1}{2}\sqrt{\frac{\pi}{c\left(1-x\right)}}+\frac{1}{2}+o\left(1\right).
\]
\end{cor}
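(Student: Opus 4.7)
The plan is to reduce to the $c = 1$ case established in the previous lemma through the substitution $y \coloneqq x^c$. With this change of variable, $\sum_{n \geq 0} x^{cn^2} = \sum_{n \geq 0} y^{n^2}$, and since $y \to 1^-$ if and only if $x \to 1^-$, the previous lemma immediately yields
\[
\sum_{n \geq 0} x^{cn^2} = \frac{1}{2}\sqrt{\frac{\pi}{1-x^c}} + \frac{1}{2} + o(1).
\]
It then only remains to compare $\sqrt{\pi/(1-x^c)}$ with $\sqrt{\pi/(c(1-x))}$ as $x \to 1^-$.

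To do this, I would factor $1 - x^c = (1-x)(1 + x + \cdots + x^{c-1})$ and use the fact that $\frac{1}{c}(1 + x + \cdots + x^{c-1}) = 1 + O(1-x)$ via a Taylor expansion at $x = 1$. This gives $1 - x^c = c(1-x)\bigl(1 + O(1-x)\bigr)$, and hence
\[
\sqrt{\frac{\pi}{1-x^c}} = \sqrt{\frac{\pi}{c(1-x)}}\bigl(1 + O(1-x)\bigr)^{-1/2} = \sqrt{\frac{\pi}{c(1-x)}} + O\!\left(\sqrt{1-x}\right).
\]
Plugging this back into the previous display produces exactly the claimed expansion, since $O(\sqrt{1-x}) = o(1)$.

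The only point requiring some care is the error analysis in the last step: the relative perturbation $O(1-x)$ coming from the comparison gets multiplied by the singular factor $(1-x)^{-1/2}$, and one must verify that the product is still $o(1)$ rather than divergent. The square root exactly tames this, producing a harmless $O(\sqrt{1-x})$ contribution. I do not anticipate any other obstacle; the argument is essentially a one-line substitution followed by a routine asymptotic cleanup.
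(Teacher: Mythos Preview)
Your proposal is correct and follows essentially the same route as the paper: the paper's proof consists of the single line ``By $(1-x^c)\underset{x\to1}{\sim}c(1-x)$,'' which is precisely your substitution $y=x^c$ followed by the asymptotic comparison you spell out. Your version is simply more explicit about the error term, correctly verifying that the $O(1-x)$ relative perturbation multiplied by the $(1-x)^{-1/2}$ singularity yields an $O(\sqrt{1-x})=o(1)$ contribution.
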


\begin{proof}
By \textbf{$\left(1-x^{c}\right)\underset{x\to1}{\sim}c\left(1-x\right)$.}
\end{proof}
Define $\omega_{m}$ for fixed $a\leq2c$, $m\in\N$, as $\omega_{m}=\begin{cases}
\binom{an}{k}\frac{\left(C-1\right)^{an-k}}{C^{an}} & \text{if }m=cn^{2}+k\text{ with }0\leq k\leq an\\
0 & \text{otherwise}
\end{cases}$. One can verify that the condition $a\leq2c$ is enough to have $\omega_{m}$
well-defined.
\begin{lem}
\label{lem:dvpmtAsym2}For $C>0$, $a\in\N,c\in\N^{*}$ such that
$a\leq2c$, one has the following asymptotic development: 
\[
\sum_{m=0}^{+\infty}\omega_{m}x^{m}\underset{x\to1^{-}}{=}\frac{1}{2}\sqrt{\frac{\pi}{c\left(1-x\right)}}+\frac{1}{2}-\frac{a}{2Cc}+o\left(1\right).
\]
\end{lem}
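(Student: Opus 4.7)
The plan is to use the binomial theorem to eliminate the inner sum in the definition of $\omega_m$, and then recognize the two terms we need as a direct consequence of Corollary \ref{cor:dvpmtAsym1} and Proposition \ref{prop:limiteaubord}.

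First I would swap the order of summation using the indexing $m = cn^2 + k$ with $0 \leq k \leq an$, which gives
\[
\sum_{m=0}^{+\infty}\omega_{m}x^{m}=\sum_{n=0}^{+\infty}\frac{x^{cn^{2}}}{C^{an}}\sum_{k=0}^{an}\binom{an}{k}(C-1)^{an-k}x^{k}=\sum_{n=0}^{+\infty}\left(\frac{C-1+x}{C}\right)^{an}x^{cn^{2}}
\]
by the binomial theorem. Setting $\epsilon := 1-x$, one has $\frac{C-1+x}{C}=1-\frac{\epsilon}{C}$, so the series equals
\[
\sum_{n\geq 0}\left(1-\frac{\epsilon}{C}\right)^{an}(1-\epsilon)^{cn^{2}}.
\]

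Next I would split this expression as
\[
\sum_{n\geq 0}(1-\epsilon)^{cn^{2}}-\sum_{n\geq 0}\left(1-\left(1-\frac{\epsilon}{C}\right)^{an}\right)(1-\epsilon)^{cn^{2}}.
\]
The first sum is exactly $\sum_{n\geq 0}x^{cn^{2}}$, and by Corollary \ref{cor:dvpmtAsym1} it equals $\frac{1}{2}\sqrt{\frac{\pi}{c(1-x)}}+\frac{1}{2}+o(1)$ as $x\to 1^{-}$. The second sum matches exactly the left-hand side of Proposition \ref{prop:limiteaubord} (the condition $a\leq 2c$ is the same one needed to keep $\omega_m$ well-defined), so it tends to $\frac{a}{2Cc}$ as $\epsilon\to 0$, i.e.\ as $x\to 1^{-}$.

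Combining the two gives the claimed asymptotic development. There is no real obstacle here: once the binomial collapse is performed, the rewriting $(C-1+x)/C = 1-(1-x)/C$ is what makes the previous two computational results fit perfectly, each producing one of the two main contributions, with the $-\frac{a}{2Cc}$ arising from the sign in the split. The only point requiring a brief check is that the indices $(n,k)\leftrightarrow m$ are in bijection, which is guaranteed precisely by $an < 2cn+c$ for all $n$, i.e.\ by $a\leq 2c$.
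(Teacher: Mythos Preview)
Your algebraic manipulation is correct, and it is exactly the computation the paper carries out in the \emph{proof of Proposition~\ref{prop:limiteaubord}} --- but in the opposite direction. In the paper's logical order, Lemma~\ref{lem:dvpmtAsym2} is proved first, independently, via the Abelian-type Lemma~\ref{lem:abelien} and its corollary: one sets $a_i=\omega_i$, compares cumulative sums against those of the coefficients of $\tfrac{1}{2}\sqrt{\pi/(c(1-x))}$, and shows $S_m/m\to\tfrac12-\tfrac{a}{2Cc}$. Only \emph{afterwards} is Proposition~\ref{prop:limiteaubord} deduced, precisely by the binomial collapse and the split you describe.

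Thus your argument is circular: you invoke Proposition~\ref{prop:limiteaubord} to prove Lemma~\ref{lem:dvpmtAsym2}, while the paper's only proof of Proposition~\ref{prop:limiteaubord} rests on Lemma~\ref{lem:dvpmtAsym2}. What you have actually shown is that the two statements are equivalent modulo Corollary~\ref{cor:dvpmtAsym1}; to make your route a genuine proof you would need an independent derivation of Proposition~\ref{prop:limiteaubord}, and the paper does not provide one.
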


\begin{proof}
\textcolor{red}{ }

The proof is similar to the previous one, with $a_{i}=\omega_{i}$
and $b_{i}=\begin{cases}
\frac{\sqrt{\pi}}{2\sqrt{c}} & \text{if }n=0\\
\frac{\sqrt{\pi}}{2\sqrt{c}}\cdot\frac{\left(2n-1\right)!}{\left(2^{n}n!\right)^{2}} & \text{otherwise}
\end{cases}$. Define similarly $A_{k},B_{k}$ and $S_{n}$.
\begin{enumerate}
\item Computations leads to $B_{n}=\frac{\sqrt{\pi}}{2\sqrt{c}}\cdot\frac{\left(2n+1\right)!}{\left(2^{n}n!\right)^{2}}$.
By $\sum_{k=0}^{an}\omega_{cn^{2}+k}=1$, one obtains $A_{cn{{}^2}-1}=n$.
\item Computations leads to
\begin{align*}
\sum_{m=0}^{cN^{2}-1}A_{m} & =c\frac{N(N-1)(2N-1)}{3}+\left(c+2c-\frac{a}{C}\right)\frac{N(N-1)}{2}+Nc\\
 & =\frac{2}{3}cN^{3}+\left(\frac{1}{2}-\frac{a}{2Cc}\right)cN^{2}+O(N).
\end{align*}
\item Similarly, 
\begin{align*}
\sum_{k=0}^{cN^{2}}B_{k} & \underset{N\to\infty}{=}\frac{2}{3}\cdot\frac{1}{\sqrt{c}}\left(\sqrt{c}N\right)^{3}+O\left(N\right)\\
 & \underset{N\to\infty}{=}\frac{2}{3}cN^{3}+O\left(N\right).
\end{align*}
\item Thus, 
\[
\frac{S_{cN^{2}}}{cN^{2}}=\frac{\sum_{k=0}^{N^{2}}A_{k}-B_{k}}{cN^{2}}\tendsto N{\infty}\frac{1}{2}-\frac{a}{2Cc}.
\]
\item With the decomposition $m=cn^{2}+k$ where $k\in\llbracket0,2n\rrbracket$,
it suffices to observe that
\[
\underset{=O(n)}{\underbrace{\left(k+1\right)}}\cdot\underset{=O(1)}{\underbrace{\left(n-B_{c\left(n+1\right)^{2}}\right)}}\leq\sum_{i=0}^{k}A_{cn^{2}+i}-B_{cn^{2}+i}\leq\underset{=O(n)}{\underbrace{\left(k+1\right)}}\cdot\underset{=O(1)}{\underbrace{\left(n+1-B_{cn^{2}}\right)}}
\]
to conclude in the same vein that $\frac{S_{m}}{m}\tendsto m{\infty}\frac{1}{2}-\frac{1}{C}$.
\end{enumerate}
\end{proof}
\begin{proof}[Proof of Proposition \ref{prop:limiteaubord}]
 Define $x=1-\epsilon$, so that $\left(1-\frac{\epsilon}{C}\right)=\frac{1}{C}\left(C-1+x\right)$.
Thus,
\[
\left(1-\frac{\epsilon}{C}\right)^{an}=\frac{1}{C^{an}}\sum_{k=0}^{an}\binom{an}{k}x^{k}\left(C-1\right)^{an-k}.
\]

Decompose $\sum_{n\geq0}\left(1-\left(1-\frac{\epsilon}{C}\right)^{an}\right)\left(1-\epsilon\right)^{cn^{2}}=\sum_{n\geq0}\left(1-\epsilon\right)^{cn^{2}}-\sum_{n\geq0}\left(1-\frac{\epsilon}{C}\right)^{an}\left(1-\epsilon\right)^{cn^{2}}$.
The second sum can be rewritten as

\begin{align*}
\sum_{n\geq0}\left(1-\frac{\epsilon}{C}\right)^{an}\left(1-\epsilon\right)^{cn^{2}} & =\sum_{n\geq0}^{+\infty}\sum_{k=0}^{an}\binom{an}{k}\frac{\left(C-1\right)^{an-k}}{C^{an}}x^{cn^{2}+k}\\
 & =\sum_{m=0}^{+\infty}\omega_{m}x^{m}
\end{align*}
where $\omega_{m}=\begin{cases}
\binom{an}{k}\frac{\left(C-1\right)^{an-k}}{C^{an}} & \text{if }m=cn^{2}+k\text{ with }0\leq k\leq an\\
0 & \text{otherwise}
\end{cases}$. By Corollary \ref{cor:dvpmtAsym1} and Lemma \ref{lem:dvpmtAsym2},
one can obtain their respective asymptotic development. Using them
both leads to

\[
\sum_{n\geq0}\left(1-\left(1-\frac{\epsilon}{C}\right)^{an}\right)\left(1-\epsilon\right)^{cn^{2}}\tendsto{\epsilon}0\frac{a}{2Cc}.
\]
\end{proof}

\section{Undecidability}

The purpose of this section is to show that given a cellular automaton, it is undecidable to know if it is strongly stochastically stable. Thus we cannot hope to have a simple characterization of this phenomenon. 

\begin{thm}
\label{thm:undecidabilite}For $\epsilon>0$ and $F$ a CA, denote
by $F_{\epsilon}$ its perturbation by a uniform noise of scale $\epsilon$.

The problem which take in input the rule of a cellular automaton $F$ and say that $\delta_{0}$ is stochastically stable under $\left\{ F_{\epsilon}\right\} _{\epsilon>0}$ is undecidable.
\end{thm}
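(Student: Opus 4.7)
The plan is to reduce from the halting problem: given a Turing machine $M$ (say on empty input), we effectively construct a cellular automaton $F_{M}$ such that $\delta_{0}$ is stochastically stable under $\{(F_{M})_{\epsilon}\}_{\epsilon>0}$ if and only if $M$ does \emph{not} halt. Since halting is undecidable, this implies undecidability of stochastic stability. We work with the uniform $\epsilon$-perturbation, for which we first want to ensure uniqueness of $\pi_{\epsilon}$ (at least for $\epsilon$ small) via the ergodicity criteria of \cite{MST19}, so that the question reduces to whether $\pi_{\epsilon}([0]_{0})\to 1$.

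For the construction, I would follow the BDPST15/ENT22 recipe: the alphabet contains the quiescent symbol $0$ plus a finite set of auxiliary symbols encoding a bounded head, tape cells, and bookkeeping signals. The deterministic rule $F_{M}$ should be designed so that (i) every $0$-clean region stays clean (so $\delta_{0}$ is $F_{M}$-invariant), (ii) an isolated ``seed'' defect launches a simulation of $M$ inside a computation zone whose spatial width $n$ and time budget scale together, and (iii) if the simulated run of $M$ reaches the halting state inside its budget, a persistent non-zero mark is deposited and (through a spreading/monotonic mechanism analogous to Section~4) contaminates the origin with positive probability; otherwise the zone collapses back to $0$. The zone should have the geometry of a parabolic cone, so that protecting an $n$-wide zone during the $n^{2}$ steps of computation corresponds to keeping roughly $cn^{2}$ cells error-free, while the number of independent attempts to start such a computation at a given cell is proportional to $an$ (one per candidate seed on the boundary).

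With that geometry, a straightforward inclusion-exclusion/union bound on the $F_{M}$-backward history of the origin expresses $1-\pi_{\epsilon}([0]_{0})$ up to controlled error terms as a sum of the form
\[
\sum_{n\geq 0}\Bigl(1-(1-\tfrac{\epsilon}{C})^{an}\Bigr)(1-\epsilon)^{cn^{2}}\cdot\mathbf{1}[M\text{ halts in }\leq n\text{ steps}],
\]
where $(1-\epsilon/C)^{an}$ is the probability that none of the $an$ seeding opportunities trigger a successful halting computation of size $n$, and $(1-\epsilon)^{cn^{2}}$ protects the zone against interfering errors. If $M$ never halts the indicator is identically $0$, the whole sum vanishes and $\pi_{\epsilon}([0]_{0})\to 1$, so $\delta_{0}$ is stochastically stable. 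If $M$ halts, all but finitely many indicators equal $1$, and Proposition~\ref{prop:limiteaubord} gives a strictly positive limit $\frac{a}{2Cc}>0$, so $\pi_{\epsilon}([0]_{0})\not\to 1$; by unicity $\pi_{\epsilon}\not\weakto{\epsilon}{0}\delta_{0}$ and thus $\delta_{0}$ is not stochastically stable.

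The hard part will be carrying out the CA engineering cleanly: one needs $F_{M}$ to be fault-tolerant enough that a computation zone is faithful to $M$ under the hypothesis that no error occurs inside the parabolic cone, while also being fragile enough outside successful halting runs that stray defects are erased on a time scale much shorter than $n^{2}$. This is exactly the flavor of construction in \cite{BDPST15,ENT22}, so the existing machinery can be invoked; the novelty is matching the exponents $(a,c)$ produced by the geometry to the hypotheses $a\le 2c$ of Proposition~\ref{prop:limiteaubord}, and checking that the secondary contributions (interference between zones, multi-seed configurations) are of higher order in $\epsilon$ and do not dominate the main term. Once those estimates are in place, the equivalence ``$M$ halts $\Leftrightarrow$ $\delta_{0}$ is not stochastically stable'' follows and the theorem is proved.
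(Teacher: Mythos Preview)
Your high-level plan---reduce from the halting problem via a BDPST/ENT-style embedding and let Proposition~\ref{prop:limiteaubord} identify the limit---is the paper's plan. But you have the reduction \emph{reversed}, and with it the role of the key lemma. In the paper's construction a seed $*$ launches a cone protected by walls; if the machine \emph{halts}, the head becomes a $0$ that erases the cone from inside, so cones are finite and one shows $\pi_\epsilon([0]_0)\to 1$; if the machine does \emph{not} halt, cones grow forever, and Proposition~\ref{prop:limiteaubord} is applied to lower-bound $\pi_\epsilon([B]_0)$ by $\sum_n\bigl(1-(1-\epsilon/C)^{an}\bigr)(1-\epsilon)^{cn^2}$ (a $*$ within reach at depth $n$, no error in the dependency cone above), whose limit is strictly positive. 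Thus $\delta_0$ is stochastically stable \emph{iff} $M$ halts, and Proposition~\ref{prop:limiteaubord} works in the \emph{non}-halting branch; the halting branch instead uses Proposition~\ref{prop:equivalentaubord} to bound the several ways (recent $*$, perturbed finite cone, two simultaneous errors, stray walls) a non-$0$ symbol can survive.

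Your inverted scheme---time-budgeted zones that collapse to $0$ on timeout, and a ``persistent non-zero mark'' deposited on halting---is not the \cite{BDPST15,ENT22} construction, and has a real gap exactly where you say ``the hard part'' lies. A single mark is erased by noise in $O(1/\epsilon)$ steps; a mark that spreads must compete with the spreading $0$, and nothing in your outline shows it wins; a mark that spawns a protected infinite cone is just the paper's construction with the cases swapped back. On the other side, your non-halting argument (``the indicator vanishes, so the sum vanishes'') only accounts for \emph{halting} contributions; it says nothing about the $B$-symbols sitting inside still-running computation zones, which are precisely what you must bound to get $\pi_\epsilon([0]_0)\to 1$. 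The fix is to keep your reduction but flip its polarity: let halting \emph{erase} rather than \emph{mark}, use Proposition~\ref{prop:limiteaubord} on the non-halting side, and use Proposition~\ref{prop:equivalentaubord} for the upper bounds on the halting side.
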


To prove the theorem, we simulate a Turing machine in the CA such
that the $0$ wins if and only if the machine halts. The construction
is heavily inspired by the one described in \cite[section 3]{BDPST15}
and \cite[section 5]{ENT22}. In short, a special symbol $*$ is used
to initialize the machine, and create a cone where the calculations
occur, protected from outside $0$s. If the machine halts, it create
a $0$ inside the cone that quickly erase the latter.

The main idea behind the construction is that if the machine halts,
the cones disappear in a finite amount of time so the $0$ ``should
win''. If the machine does not halt, the cones are infinite and stops
the $0$. The errors are both useful and a problem: they are the one
that make the $*$ symbols appear in the first place, but can perturb
the computation of the machine.

In this section, we first recall some basic notions about Turing machines,
and then describe with more details the CA and its perturbation. The
last parts deals with the two cases, when the Turing machine halts
in a finite amount of time or not.

\subsection{Definition of a Turing machine}

For a recent broader study on the subject of Turing machines and its
applications, see for example \cite{MP22,Odifreddi92}.\textcolor{brown}{{}
}A Turing machine is one of many computation model. Consider a bi-infinite
tape (indexed by $\Z$) where on each cell is inscribed a symbol $\gamma\in\Gamma=\mathcal{B}\sqcup\left\{ \emptyset\right\} $,
where $\mathcal{B}$ is a finite alphabet. Denote by $Q$ a finite
set of state of the head of the machine, containing $q_{\bot}$ a
halting state and $q_{0}$ an initial state. Finally, define by $\delta:Q\times\Gamma\to Q\times\Gamma\times\left\{ \leftarrow,\rightarrow\right\} $
the transition function of the machine. The Turing machine in itself
is the tuple $(Q,q_{0},q_{\bot},\Gamma,\delta)$.

Initially, the head is positioned at the cell indexed by $0$ in the
state $q_{0}$, while each cell of the tape is inscribed with the
empty symbol $\emptyset$. At each step, the head with state $q$
read the symbol $\gamma$ on the cell it is on, then follows the instruction
of the transition function $\delta(q,\gamma)=\left(q^{\prime},\gamma^{\prime},d\right)$:
the head takes the state $q^{\prime}$, replace $\gamma$ by $\gamma^{\prime}$
on the cell it is on, and take a step in the direction given by $d$.

$Q$ and $\Gamma$ being finite and each step following a local rule,
one can easily simulate the run of a Turing machine in a cellular
automata. In our case, it is the role of the symbol $*$ which create,
after one iteration of the CA, a zone bounded by two walls containing
the representation of the empty tape $B_{\emptyset}$ and one symbol
$B_{\left(q_{i},\emptyset\right)}$ at the same position occupied
previously by $*$ representing the head of the Turing machine. The
walls move at speed $v>1$ in each direction, leaving $B_{\emptyset}$
on their way: in the absence of errors, the head moving at most at
speed 1 in a direction cannot meet a wall, and thus its run is not
affected by the finite nature of the tape.

Turing machines are the base tool to prove undecidability problems,
as the problem ``the machine reach the state $q_{\bot}$ in a finite
amount of steps'' is undecidable (or uncomputable): there is no algorithm
such that, given $(Q,q_{0},q_{\bot},\Gamma,\delta)$, can determine
if this machine halts in a finite amount of time. 

\subsection{Description of the CA}

\subsubsection{General description}

In the construction of \cite{BDPST15}, the maximum speed was $1$,
and the particles had speed $\frac{1}{4}$ and $\frac{1}{5}$. In
order to simplify the proof (e.g. the particles have integer speeds),
we choose the maximal speed to be $v=40$. The neighborhood radius
will then be also $v$. The alphabet $\A$ of cardinal $C<\infty$
is composed of the following symbols:
\begin{itemize}
\item $0$, which are spreading on the $B$ at speed $v$, but also on the
walls (if on the right side).
\item $B$, the tape on which the Turing machine is running. One can decompose
it in a finite number of $B_{\tilde{\gamma}}$, where $\tilde{\gamma}\in\Gamma\sqcup\left(Q\times\Gamma\right)\sqcup\Sigma$
where if $\tilde{\gamma}=\left(q,\gamma\right)\in Q\times\Gamma$,
$\gamma$ is the symbol written on the tape while $q$ is the state
of the head of the Turing machine which is positioned here. Otherwise
when $\tilde{\gamma}=\gamma\in\Gamma$ it is just the symbol written
on the tape. Finally $\Sigma$ is the set encapsulating the signals
used for the comparison: comparison signals $S_{1},S_{2}$, the destruction
signals and the position signal. If the state $q_{\bot}\in Q$ is
reached (when the machines halts), the symbol is replaced by a $0$,
which will spread on the tape around it.
\item $*$, which initializes the Turing machine and create walls on each
side.
\item Walls: the left and right inner walls with speed $\nicefrac{v}{5}$,
and the outer left and right walls with speed $\nicefrac{v}{4}$.
They stop the propagation of the $0$, but only in one direction;
they are erased if caught up by a $0$ from the other direction. If
the walls are created by a $*$ cell, the space between them is filled
with $B$.
\end{itemize}
\begin{figure}[h]
\centering{}\def\svgwidth{\columnwidth *4/4}
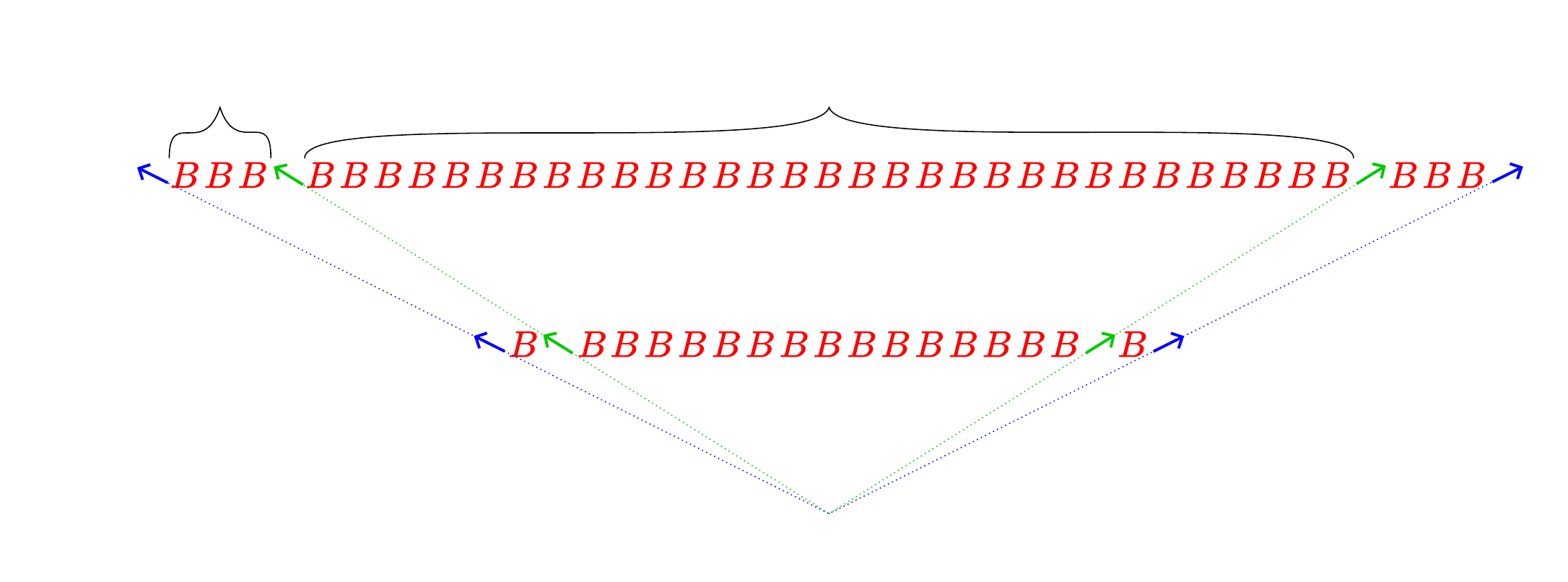\caption{Illustration of the behavior of the CA.}
\end{figure}

\begin{rem}
\label{rem:deltaB pas point fixe}In the following we consider that
the only fixed point of $F$ is $0^{\infty}$ (and thus $\delta_{0}$
is the only $F$-invariant Dirac mass). In the case that there is
a type of $B$ symbol such that $B^{\infty}$ is a fixed point, we
can define a new symbol $B^{\prime}$ with the same behavior under
the CA, but with the added rule that $f((B,\cdots,B))=B^{\prime}$
and $f((B^{\prime},\cdots,B^{\prime}))=B$.
\end{rem}

\subsubsection{Collisions}

The motivation behind taking this construction instead of a simpler
one is that \emph{in the absence of perturbation}, a cone created
by a $*$ symbol can't be erased. This external robustness is granted
by the double layer of walls each side of the cone and signals such
that, if two cones collides, only the youngest one (i.e. created by
the most recent $*$) survives. 

\begin{figure}[h]
\begin{centering}
\includegraphics{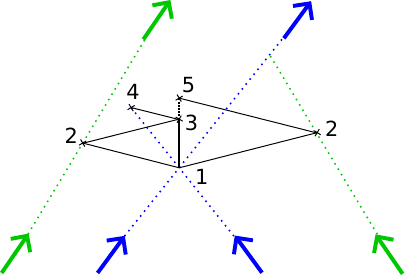}
\par\end{centering}
\caption{\label{fig:collision}Illustration of the collision process.}

\end{figure}

The collision is handled as illustrated in the space-time diagram
in Figure \ref{fig:collision}:
\begin{enumerate}
\item When two outer walls collides, they produce a vertical position signal,
as well as two comparison signals propagating trough the buffer zones.
\item They collide to their respective inner wall.
\item The comparison signal bouncing off the younger inner wall arrives
first to the position signal: a destruction signal is sent.
\item The destruction signal erases the older outer wall.
\item The other comparison signal is destroyed upon arrival, letting the
younger outer wall erasing the information in the older cone.
\end{enumerate}
All the comparison and destruction signals propagate at speed $v$
trough the buffer zones, to ensure that the older outer walls are
erased before encountering the younger inner wall. 

\subsubsection{Perturbation\label{subsec:Perturbation}}

At each step of the automaton, the configuration is perturbed by a
uniform noise of size $\epsilon>0$: independently from each other,
each cell has a probability $\epsilon$ to have its symbol replaced
by a symbol chosen uniformly in $\A$. The local rule is then $f_{\epsilon}\left(x_{\mathcal{N}},a\right)=\begin{cases}
1-\epsilon+\frac{\epsilon}{C} & \text{if }a=f(x_{\mathcal{N}})\\
\frac{\epsilon}{C} & \text{if }a\neq f(x_{\mathcal{N}})
\end{cases}.$ 

To make computations clearer, we define $\left(E_{i}^{t}\right)_{i,t\in\Z}$
independent random variables such that for all $i,t\in\Z$, $E_{i}^{t}(\Omega)=\left\{ \emptyset\right\} \bigcup\A\eqqcolon\A^{\prime}$
and $\begin{cases}
P\left(E_{i}^{t}=\emptyset\right)=1-\epsilon\\
P\left(E_{i}^{t}=a\right)=\frac{\epsilon}{C} & \forall a\in\A
\end{cases}.$ It defines a local rule $g:\A^{\mathcal{N}}\times\A^{\prime}\to\A$
with $g\left(x_{\mathcal{N}},e\right)=\begin{cases}
f\left(x_{\mathcal{N}}\right) & \text{if }e=\emptyset\\
e & \text{otherwise}
\end{cases}$, which can be made in a global rule $G:\A^{\Z}\times\A^{\prime^{\Z}}\to\A^{\Z}$
via $G\left(x,\left(e_{i}\right)_{i\in\Z}\right)_{j}=g\left(x_{j+\mathcal{N}},e_{j}\right)$.
It is not difficult to see that for all observable $A$, $F_{\epsilon}(x,A)=P\left(G\left(x,\left(E_{i}^{t}\right)_{i\in\Z}\right)\in A\right)$.
Thus, for a $F_{\epsilon}$-invariant measure $\pi_{\epsilon}$, we
can define a stationary sequence $\left(X^{t}\right)_{t\in\Z}$ which
verifies for all $t\in\Z$ the relation $X^{t+1}=G\left(X^{t},\left(E_{i}^{t+1}\right)_{i\in\Z}\right)$
and $X^{t}$ is distributed according to $\pi_{\epsilon}$. The event
$E_{i}^{t}=a\in\A$ is realized when an error is made at time $t$
in cell $i$, and a symbol $a$ is written instead of the expected
result of the CA.

Remark that because it only appear via errors, for each $F_{\epsilon}$-invariant
measure the probability to have a $*$ symbol in a given cell is $\frac{\epsilon}{C}$;
and by independence of the errors (i.e. the independence of $\left(E_{i}^{t}\right)$),
the probability to have at least one $*$ symbol over $n$ cells is
$1-\left(1-\frac{\epsilon}{C}\right)^{n}$. 

\subsection{Case where the TM doesn't halt}

Suppose that the Turing doesn't halt: let us show that for any collection
$\left(\pi_{\epsilon}\right)_{\epsilon>0}$ of $F_{\epsilon}$-invariant
measures, the value $\pi_{\epsilon}\left(\left[0\right]_{0}\right)$
does not converge towards 1 when $\epsilon$ goes to $0$. Here, we
will prove that there is a map $f:]0,1[\to\R$ such that $\pi_{\epsilon}\left([B]_{0}\right)\geq f(\epsilon)$
and $f(\epsilon)\tendsto{\epsilon}0l>0$.

A $*$ produces a zone of slope at least $\frac{v}{5}-1$ of $B$
symbols. Thus, in order to have a $B$, it suffices that $n$ step
before there was a $*$ symbol within the $2n\left(\frac{v}{5}-1\right)+1$
cells, and that there wasn't any error on the dependence cone of our
original cell over the last $n$ steps. The size of that cone is $\sum_{t=0}^{n-1}\left(2vt+1\right)=vn^{2}-n\left(v-1\right)$.
See Figure \ref{fig:nonHalt}.

\begin{figure}[H]
\def\svgwidth{\columnwidth *3/4}
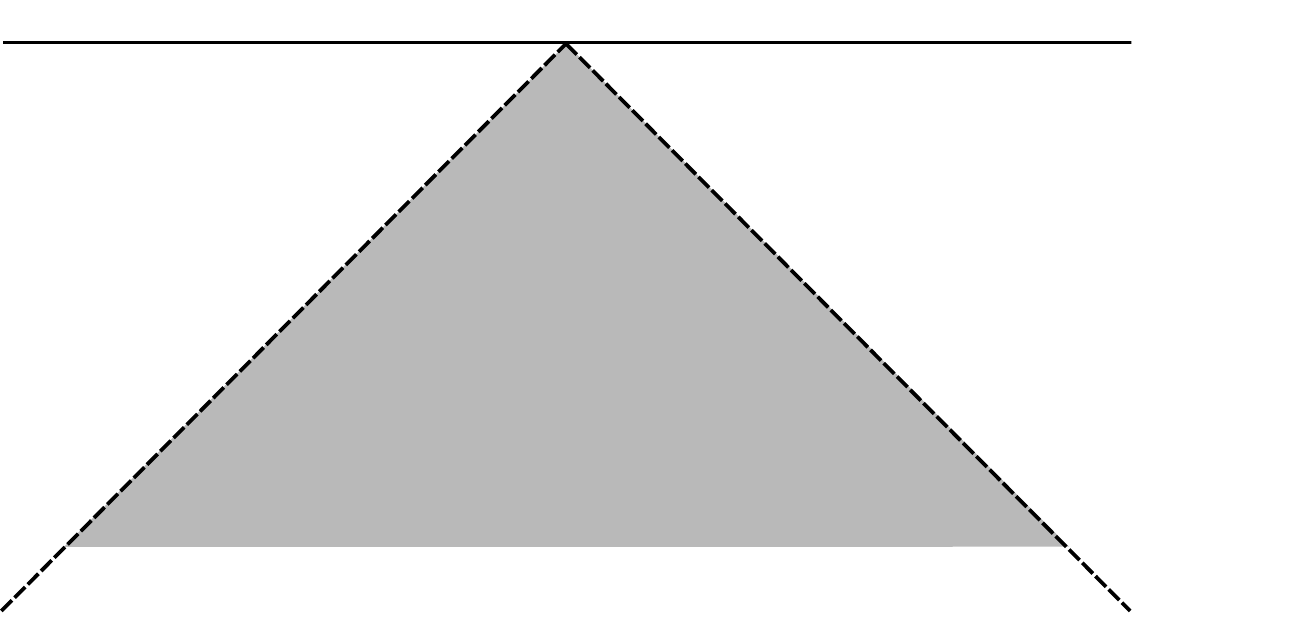\caption{\label{fig:nonHalt}Case where the TM doesn't halt. If there is no
error in the grayed area and a $*$ symbol in the blue area, there
must be a $B$ at $\left(0,0\right)$.}

\end{figure}

The $*$ being only able to appear via an error, these events are
disjoints for distinct $n$, thus
\begin{align*}
\pi_{\epsilon}\left([B]_{0}\right) & =P\left(X_{0}^{0}=B\right)\\
 & \geq\sum_{n=1}^{+\infty}P\left(\text{"a }*\text{ symbol at time }-n\text{"}\cap\text{"no error after in the dependence cone of \ensuremath{(0,0)}"}\right)\\
 & =\sum_{n=1}^{+\infty}P\left(\left(\bigcup_{i=-n\left(\frac{v}{5}-1\right)}^{n\left(\frac{v}{5}-1\right)}E_{i}^{-n}=*\right)\cap\left(\bigcap_{\substack{-n<t\leq0\\
-nv\leq i\leq nv
}
}E_{i}^{t}=\emptyset\right)\right)\\
 & =\sum_{n=1}^{+\infty}\left(1-\left(1-\frac{\epsilon}{C}\right)^{2n\left(\frac{v}{5}-1\right)+1}\right)\left(1-\epsilon\right)^{vn^{2}-n\left(v-1\right)}\\
 & \geq\sum_{n=1}^{+\infty}\left(1-\left(1-\frac{\epsilon}{C}\right)^{2n\left(\frac{v}{5}-1\right)}\right)\left(1-\epsilon\right)^{vn^{2}}\eqqcolon f(\epsilon).
\end{align*}
And we can conclude using Proposition \ref{prop:limiteaubord}, as
$f(\epsilon)\tendsto{\epsilon}0\frac{1}{C}\cdot\frac{\frac{v}{5}-1}{v}>0$.

\subsection{Case where the TM halts \label{subsec:casehalts}}

Suppose that the Turing machine halts after a finite amount of steps:
let us show that for any collection $\left(\pi_{\epsilon}\right)_{\epsilon>0}$
of $F_{\epsilon}$-invariant measures, the value $\pi_{\epsilon}\left(\left[0\right]_{0}\right)$
converges towards 1 when $\epsilon$ goes to $0$. Here, we will prove
that there that the probability to encounter any other symbol goes
to $0$.

\subsubsection{Definitions}

When the machine halts (in a finite time), the heads become a $0$
which spreads at speed $v$. Thus, in the absence of error, the space-time
zone filled with $B$ symbols between the two walls created by a $*$
is finite: denote by $T$ its height, and

\[
T_{j,t}\coloneqq\left\{ \left(i,s\right)\in\Z\times\Z\mid0<s-t\leq T,-\frac{v}{4}\left(s-t\right)\leq i-j\leq\frac{v}{4}\left(s-t\right)\right\} 
\]
the zone created by a $*$ in $j$ at time $t$. To bound the probability
to have each symbol in $0$ at time $0$, we use the stationary sequence
$\left(X^{t}\right)_{t\in\Z}$ with distribution $\pi_{\epsilon}$
and the description of errors $\left(E_{i}^{t}\right)_{i,t\in\Z}$
defined in Section \ref{subsec:Perturbation} to search the source
of the symbol. Loosely, we say that the symbol $a$ comes from a symbol
$b$ in $j\in\Z$ at time $t\in\Z^{-}$ an error makes a $b$ appear
here and there is no $*$ symbol ``between''. As the speed of propagation
of $B$ and walls is bounded by $\frac{v}{4}$, the zone of space-time
we consider to be $*$-free is the parallelogram $P_{j,t}\coloneqq\left\{ \left(i,s\right)\in\Z\times\Z\mid t<s\leq0,-\frac{v}{4}\left(s-t\right)\leq i-j\leq\frac{v}{4}\left(s-t\right),-\frac{v}{4}\left|s\right|\leq i\leq\frac{v}{4}\left|s\right|\right\} $.
Therefore a symbol $a$ comes from a symbol $b$ in $j\in\Z$ at time
$t\in\Z^{-}$ if $E_{j}^{t}=b$ and $\forall(i,s)\in P_{j,t},E_{i}^{s}\neq*$.
\begin{rem*}[Area of a parallelogram]

\begin{figure}[h]
\centering{}\def\svgwidth{\columnwidth *3/4}
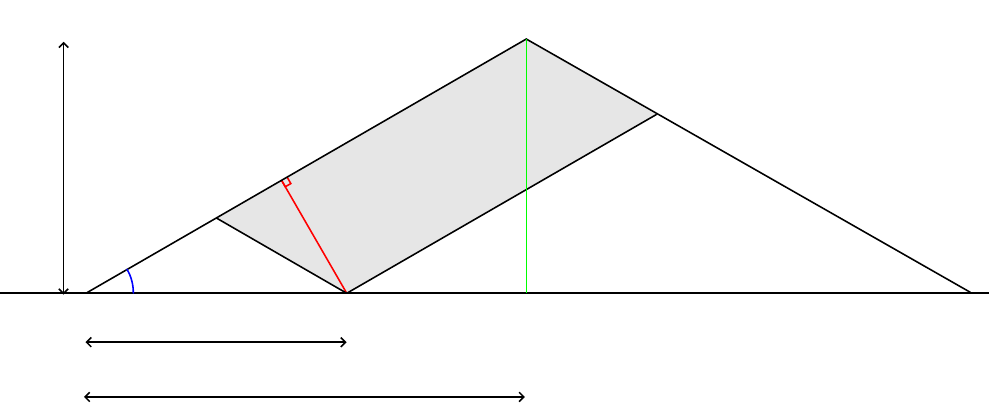\caption{\label{fig:AreaCone}Area of a parallelogram.}
\end{figure}
Suppose an error occurred at time $t$ in $j\leq0$ (point $B$),
the case $j>0$ being symmetrical. Define $i=\frac{v}{4}t+j$. In
order to say that a symbol in $0$ at time $0$ (point $A$) comes
from this error, we can suppose the parallelogram $P_{j,t}$ described
in Figure \ref{fig:AreaCone} to be $*$-free. Using its notations,
the area of this parallelogram has formula:

\begin{align*}
\text{Area} & =BD\cdot AC\\
 & =OB\sin\left(\theta\right)\left(AO-OC\right)\\
 & =i\sin\left(\arctan\left(\nicefrac{4}{v}\right)\right)\left(t\sqrt{1+\left(\nicefrac{v}{4}\right)^{2}}-\frac{i}{2}\sqrt{1+\left(\nicefrac{4}{v}\right)^{2}}\right)\\
 & \geq it\cdot\underset{\eqqcolon K^{\prime}>0}{\underbrace{\sin\left(\arctan\left(\nicefrac{4}{v}\right)\right)\left(\sqrt{1+\left(\nicefrac{v}{4}\right)^{2}}-\frac{v}{8}\sqrt{1+\left(\nicefrac{4}{v}\right)^{2}}\right)}}.
\end{align*}
\end{rem*}

\subsubsection{The $B$ symbols}

A $B$ symbol in $0$ at time $0$ must come either from an error
in $0$ at time $0$ (denote this event by $\Omega_{0}$), or from
a $*$ symbol in the last $T$ steps (denoted by $\Omega_{1}$) or
further (denoted by $\Omega_{2}$), or from at least two simultaneous
errors (denoted by $\Omega_{3}$) that can spread $B$. Thus, $\left\{ X_{0}^{0}=B\right\} \subset\Omega_{0}\cup\Omega_{1}\cup\Omega_{2}\cup\Omega_{3}$
and 
\begin{itemize}
\item $P\left(\Omega_{0}\right)=P\left(E_{0}^{0}=B\right)=\frac{\epsilon}{C}\tendsto{\epsilon}00$.
\item $P\left(\Omega_{1}\right)=P\left(\bigcup_{\substack{1\leq t\leq T\\
-\frac{v}{4}t\leq i\leq\frac{v}{4}t
}
}E_{i}^{-t}=*\right)\leq\sum_{t=1}^{T}\sum_{i=-\frac{v}{4}t}^{\frac{v}{4}t}P\left(E_{i}^{-t}=*\right)\leq\left(\frac{v}{2}+1\right)T^{2}\frac{\epsilon}{C}\tendsto{\epsilon}00.$
\item $\Omega_{2}$: the Turing machine (or the tape) must be perturbed
before it halts, therefore 
\begin{align*}
P\left(\Omega_{2}\right) & \leq\sum_{t\geq T}\sum_{j=-\frac{v}{4}t+1}^{\frac{v}{4}t-1}P\left(\text{"a }*\text{ at }j\text{ at time \ensuremath{-t}"}\cap\text{"an error in the finite zone"}\cap\text{"no }*\text{ in the parallelogram after"}\right)\\
 & \leq\sum_{t\geq0}\sum_{j=-\frac{v}{4}t+1}^{\frac{v}{4}t-1}P\left(\left(E_{j}^{-t}=*\right)\cap\left(\bigcup_{(k,s)\in T_{j,-t}}E_{k}^{s}\neq\emptyset\right)\cap\left(\bigcap_{(k,s)\in P_{j,-t}\backslash T_{j,-t}}E_{k}^{s}\neq*\right)\right)\\
 & \leq\sum_{t\geq0}2\sum_{i=1}^{\frac{v}{4}t+1}\frac{\epsilon}{C}\left(1-\left(1-\epsilon\right)^{\frac{v}{4}T^{2}}\right)\left(1-\frac{\epsilon}{C}\right)^{K^{\prime}it-\frac{v}{4}T^{2}}\\
 & =2\frac{\epsilon}{C}\left(1-\left(1-\epsilon\right)^{\frac{v}{4}T^{2}}\right)\left(1-\frac{\epsilon}{C}\right)^{-\frac{v}{4}T^{2}}\sum_{t\geq0}\sum_{i=1}^{\frac{v}{4}t+1}\left(1-\frac{\epsilon}{C}\right)^{K^{\prime}it}
\end{align*}
One can rewrite $\sum_{t\geq0}\sum_{i=1}^{\frac{v}{4}t+1}x^{it}=\sum_{n}a_{n}x^{n}$
with $a_{n}=\left|\left\{ \left(i,t\right)\mid i\leq\frac{v}{4}t+1,\;it=n\right\} \right|\leq d(n)\leq2\sqrt{n}.$
But $\sum_{n}\sqrt{n}x^{n}\underset{x\to1^{-}}{\sim}\frac{\sqrt{\pi}}{2\left(1-x\right)^{3/2}}$,
so 
\begin{align*}
P\left(\Omega_{2}\right) & \leq2\frac{\epsilon}{C}\left(1-\left(1-\epsilon\right)^{\frac{v}{4}T^{2}}\right)\left(1-\frac{\epsilon}{C}\right)^{-\frac{v}{4}T^{2}}\sum_{n\geq0}\sqrt{n}\left(1-\frac{\epsilon}{C}\right)^{K^{\prime}n}\\
 & \underset{\epsilon\to0}{\sim}\epsilon^{2}\cdot\frac{K^{\prime\prime}}{\epsilon^{\nicefrac{3}{2}}}\\
 & \tendsto{\epsilon}00.
\end{align*}
\item $\Omega_{3}$: to spread the $B$ symbol, it must be protected in
a similar way a $*$ do with walls; there must be at least two simultaneous
errors to create it.
\begin{align*}
P\left(\Omega_{3}\right) & \leq\sum_{t\geq0}P\left(\text{"two errors at time }-t\text{"}\cap\text{"no }*\text{ in the parallelograms after"}\right)\\
 & \leq\sum_{t\geq0}4\sum_{a=0}^{\frac{v}{4}t+1}\sum_{b=a}^{\frac{v}{4}t+1}P\left(\left(E_{a-\frac{v}{4}t}^{-t}\neq\emptyset\right)\cap\left(E_{b-\frac{v}{4}t}^{-t}\neq\emptyset\right)\cap\left(\bigcap_{(k,s)\in A_{a-\frac{v}{4}t,-t}\cup A_{b-\frac{v}{4}t,-t}}E^{s}\neq*\right)\right)\\
 & \leq\sum_{t\geq0}4\sum_{a=0}^{\frac{v}{4}t+1}\sum_{b=a}^{\frac{v}{4}t+1}\epsilon^{2}\left(1-\frac{\epsilon}{C}\right)^{K^{\prime}bt}\\
 & =4\epsilon^{2}\sum_{t\geq0}\sum_{a=0}^{\frac{v}{4}t+1}\sum_{b=a}^{\frac{v}{4}t+1}\left(1-\frac{\epsilon}{C}\right)^{K^{\prime}bt}
\end{align*}
One can rewrite $\sum_{t\geq0}\sum_{a=0}^{\frac{v}{4}t+1}\sum_{b=a}^{\frac{v}{4}t+1}\left(1-\frac{\epsilon}{C}\right)^{K^{\prime}bt}=\sum_{n}b_{n}x^{n}$
with $x=\left(1-\frac{\epsilon}{C}\right)^{K^{\prime}}$ and $b_{n}=\left|\left\{ \left(a,b,t\right)\mid a\leq b\leq\frac{v}{4}t+1,\;bt=n\right\} \right|$.
Remark that
\[
b_{n}=\sum_{\substack{b\mid n\\
b\leq\frac{1+\sqrt{1+vn}}{2}
}
}b\leq\sqrt{vn}d(n).
\]
We know that (see for example \cite{NR83}) $d(n)\leq n^{\frac{2\ln(2)}{\ln(\ln n)}}$.
Define $N_{0}\in\N$ such that for all $n\geq N_{0}$, $d(n)\leq n^{\frac{1}{6}}$
and therefore $b_{n}\leq\sqrt{v}n^{\nicefrac{2}{3}}$. Using Proposition
\ref{prop:equivalentaubord}, we have $\sum_{n\geq0}n^{\nicefrac{2}{3}}x^{n}\underset{x\to1^{-}}{\sim}\frac{\Gamma(\nicefrac{5}{3})}{\left(1-x\right)^{\nicefrac{5}{3}}}$.
Similarly as the previous point, one can then conclude that, 
\[
P\left(\Omega_{3}\right)\leq\underset{\tendsto{\epsilon}00}{\underbrace{4\epsilon^{2}\sum_{n=0}^{N_{0}}b_{n}}}+\underset{\sim\frac{4\epsilon^{2}\sqrt{v}\Gamma\left(\nicefrac{5}{3}\right)}{\left(K^{\prime}\frac{\epsilon}{C}\right)^{\nicefrac{5}{3}}}\tendsto{\epsilon}00}{\underbrace{4\epsilon^{2}\sqrt{v}\sum_{n\geq N_{0}}n^{\nicefrac{2}{3}}\left(1-\frac{\epsilon}{C}\right)^{K^{\prime}n}}}\tendsto{\epsilon}00.
\]
\end{itemize}
Finally, $P\left(X_{0}^{0}=B\right)=\pi_{\epsilon}\left(\left[B\right]_{0}\right)\tendsto{\epsilon}00$.

\subsubsection{Walls}

By symmetry, it is enough to only work on the walls going towards
the right. Regardless if they are inner or outer, they move only if
they have a $B$ symbol to their left (and they carry it with them
next). A wall can then only come from two sources: a $*$, or an error
making the wall appear, but then must be adjacent to a $B$ to survive
a step. For a $\nearrow$ with speed $c>0$, $\left\{ X_{0}^{0}=\nearrow\right\} \subset\Omega_{0}^{\prime}\cup\Omega_{1}\cup\Omega_{2}\cup\Omega_{3}^{\prime}$,
where $\Omega_{1}$ and $\Omega_{2}$ are the same as before, $\Omega_{0}^{\prime}$
is the event that there is an error at $(0,0)$ producing a $\nearrow$,
and $\Omega_{3}^{\prime}$ the event that it comes from an error producing
a $\nearrow$ in the past, thus needing an adjacent $B$ to survive.
\begin{itemize}
\item $P\left(\Omega_{o}^{\prime}\right)=P\left(E_{0}^{0}=\nearrow\right)=\frac{\epsilon}{C}\tendsto{\epsilon}00$.
\item $P\left(\Omega_{1}\cup\Omega_{2}\right)\tendsto{\epsilon}00$ by previous
calculations (a $*$ produces more $B$ symbols than walls).
\item Finally,
\begin{align*}
P\left(\Omega_{3}^{\prime}\right) & \leq\sum_{t=1}^{+\infty}P\left(\text{an error produces a }\nearrow\text{ at time }-t\text{ in }-tc\text{ with a }B\text{ symbol left of it and no }*\text{ after}\right)\\
 & =\sum_{t=1}^{+\infty}P\left(\left(E_{-tc}^{-t}=\nearrow\right)\cap\left(X_{-tc-1}^{-t}=B\right)\cap\left(\bigcap_{-t<s\leq0}E_{-sc}^{-s}\neq*\right)\right)\\
 & =\sum_{t=1}^{+\infty}\frac{\epsilon}{C}\pi_{\epsilon}\left(\left[B\right]_{0}\right)\left(1-\frac{\epsilon}{C}\right)^{t}\\
 & \leq\pi_{\epsilon}\left(\left[B\right]_{0}\right)\tendsto{\epsilon}00.
\end{align*}
\end{itemize}

\section{Remaining questions}
\begin{itemize}
\item In the last construction, we don't detail what are the stochastically
stable measures in the case when the Turing machine doesn't halts;
in particular, if we ignore remark \ref{rem:deltaB pas point fixe},
we don't know if $\delta_{B}$ is stable under this perturbation.
A potential approach would be to consider that the $0$ spreads into
the $B$, and the walls let the $B$ spreads into the $0$ in some
sense. It may be linked with the $3$-state cyclic cellular automaton,
as defined in \cite{HB19}: the $2$ spreads into the $1$, which
spreads into the $0$, which spreads into the $2$. In the cited paper,
the authors show a formula for the limit measure depending on the
measure behind the starting configuration. We conjecture that for
this CA is strongly stochastically stable under a uniform perturbation,
with the stable measure being $\frac{1}{3}\left(\delta_{0}+\delta_{1}+\delta_{2}\right)$.
In our construction, it may lead to $\alpha\delta_{0}+\left(1-\alpha\right)\delta_{B}$
(with a parameter $\alpha$ left to be determined) being the only
stable measure.
\item In this article we only showed example of cases where there is only
one stochastically stable measure. Define $\M_{0}$ to be the set
of all stochastically stable measure (for an $F$ and its perturbation
$\left(F_{\epsilon}\right)_{\epsilon>0}$ fixed). What would be its
properties, and can we characterize the sets $M$ that can be constructed
as a $\M_{0}$ ? Ongoing research tends to show that for a continuous
perturbation, $\M_{0}$ is at least connected. Using similar constructions
as the one in the final section of this article and in \cite{HS18},
it seems that such sets could be characterized by their computational
properties.
\end{itemize}
\bibliographystyle{plain}
\bibliography{biblio}

\appendix

\section{\label{sec:Demonstrations-lemmas}Other demonstrations of section
\ref{sec:lemmas}}

\begin{proof}[Proof of Proposition \ref{lem:abelien}]
(Adapted from \cite{Gourdon20}) Suppose $\frac{a_{n}}{b_{n}}\to l$.
Let $\epsilon>0$, $n_{0}\in\N$ be such that $\forall n\geq n_{0},\:\left|\frac{a_{n}}{b_{n}}-l\right|\leq\epsilon$,
i.e. $\left|a_{n}-l\cdot b_{n}\right|\leq\epsilon b_{n}$. Then $\forall x\in[0,1[$,
\[
\left|\sum_{n\geq n_{0}}\left(a_{n}-l\cdot b_{n}\right)x^{n}\right|\leq\epsilon\sum_{n\geq n_{0}}b_{n}x^{n}
\]
and thus
\[
\left|\sum_{n\geq0}\left(a_{n}-l\cdot b_{n}\right)x^{n}\right|\leq\sum_{n=0}^{n_{0}-1}\left|a_{n}-l\cdot b_{n}\right|x^{n}+\epsilon\sum_{n\geq n_{0}}b_{n}x^{n}.
\]
As $\sum b_{n}$ is a divergent series with positive terms, $\sum_{n\geq n_{0}}b_{n}x^{n}\tendsto x{1^{-}}+\infty$.
Therefore $\exists\lambda_{\epsilon}<1,\forall x\in[\lambda_{\epsilon},1[,$
\[
\epsilon\sum_{n\geq n_{0}}b_{n}x^{n}\geq\sum_{n=0}^{n_{0}-1}\left|a_{n}-l\cdot b_{n}\right|.
\]
Then for $x\in[\lambda_{\epsilon},1[$, 
\[
\left|\sum_{n\geq0}\left(a_{n}-l\cdot b_{n}\right)x^{n}\right|\leq2\epsilon\sum_{n\geq n_{0}}b_{n}x^{n}\leq2\epsilon\sum_{n\geq0}b_{n}x^{n}
\]
and finally
\[
\left|\frac{\sum_{n\geq n_{0}}a_{n}x^{n}}{\sum_{n\geq n_{0}}b_{n}x^{n}}-l\right|\leq2\epsilon.
\]
\end{proof}
\begin{proof}[Proof of Corollary \ref{cor:abelien}]
By Cauchy product, $\sum A_{n}x^{n}=\left(\sum a_{n}x^{n}\right)\left(\sum x^{n}\right)=\frac{\sum a_{n}x^{n}}{1-x}$.
Then, 
\[
\frac{\sum^{\infty}a_{n}x^{n}}{\sum^{\infty}b_{n}x^{n}}=\frac{\sum^{\infty}A_{n}x^{n}}{\sum^{\infty}B_{n}x^{n}}.
\]
One have the result using the Lemma \ref{lem:abelien}, as $B_{n}>0$
and $\sum B_{n}$ diverges too.
\end{proof}

\end{document}